\documentclass[11pt,reqno]{amsart}
\usepackage[a4paper, hmargin={2.7cm,2.7cm},vmargin={3.3cm,3.3cm}]{geometry}
\usepackage[english]{babel}
\usepackage{color}
\usepackage[noadjust]{cite}
\usepackage{amsmath}
\usepackage{amssymb}
\usepackage{amsfonts}
\usepackage{amsthm}
\usepackage{enumerate}
\usepackage{dsfont}
\usepackage{amsthm}
\usepackage{todonotes}
\usepackage{cleveref}
\usepackage{etoolbox}
\usepackage{appendix}

\linespread{1.0}
\setlength{\parskip}{0.25em}
\numberwithin{equation}{section}

\makeatletter
\@namedef{subjclassname@2020}{\textup{2020} Mathematics Subject Classification}
\makeatother

\makeatletter
\patchcmd{\ttlh@hang}{\parindent\z@}{\parindent\z@\leavevmode}{}{}
\patchcmd{\ttlh@hang}{\noindent}{}{}{}
\makeatother

\theoremstyle{plain}
\newtheorem{theorem}{Theorem}[section]
\newtheorem{lemma}[theorem]{Lemma}

\theoremstyle{definition}

\newtheorem{examplex}[theorem]{Example}

\theoremstyle{remark}

\DeclareMathOperator{\pker}{pker}
\DeclareMathOperator{\Rel}{Rel}

\DeclareMathOperator{\loc}{loc}
\DeclareMathOperator{\spann}{span}

\newcommand{\N}{\mathbb{N}}

\newcommand{\Hpi}{\mathcal{H}_{\pi}}

\newcommand{\Hr}{\mathcal{H}_{\rho}}

\title[Counting function estimates for coherent frames and Riesz sequences]{Counting function estimates for \\ coherent frames and Riesz sequences}

\author{Effie Papageorgiou}

\address{Institut für Mathematik, Universität Paderborn, Warburger Str. 100, D-33098 Paderborn,
Germany}
\email{papageoeffie@gmail.com}

\author{Jordy Timo van Velthoven}

\address{Faculty of Mathematics, University of Vienna, Oskar-Morgenstern-Platz 1, 1090 Vienna, Austria}

\email{jordy-timo.van-velthoven@univie.ac.at}

\keywords{Beurling density, counting function, coherent system, frame, Riesz sequence}

\subjclass[2020]{22E27, 42C15, 43A80, 46B15}

\begin{document}

\maketitle

\begin{abstract}
We prove various estimates for the asymptotics of counting functions associated to point sets of coherent frames and Riesz sequences. The obtained results recover the necessary density conditions for coherent frames and Riesz sequences for general unimodular amenable groups, while providing more precise estimates under additional localization conditions on the coherent system for groups of polynomial growth.  
\end{abstract}

\section{Introduction}
For an irreducible, square-integrable projective representation $(\pi, \Hpi)$ of a second countable unimodular group $G$, a \emph{coherent system} is a subsystem of the orbit of a vector $g \in \Hpi$ of the form
\[
\pi(\Lambda) g = \{ \pi (\lambda) g : \lambda \in \Lambda \}
\]
for a discrete set $\Lambda \subseteq G$. Such systems are closely connected to subsystems of vectors defining coherent states \cite{perelomov1972coherent, moscovici1978coherent}.
A coherent system $\pi(\Lambda)g$ is said to be a \emph{frame} for $\Hpi$ if there exist constants $0 < A \leq B < \infty$ such that
\[
A \| f \|^2 \leq \sum_{\lambda \in \Lambda} |\langle f, \pi (\lambda) g \rangle |^2 \leq B \| f \|^2 \quad \text{for all} \quad f \in \Hpi.
\]
For a general nonzero vector $g \in \Hpi$, the existence of sets $\Lambda \subseteq G$ yielding a coherent frame $\pi(\Lambda) g$ follows from the discretization of continuous frames \cite{freeman2019discretization}. More explicit constructions of coherent frames and bases arising from nilpotent and solvable Lie groups can be found in, e.g., \cite{oussa2024orthonormal, oussa2019compactly, groechenig2018orthonormal}.

In this paper, we shall be interested in the relation between the distribution of a point set $\Lambda$ and localization properties of a vector $g \in \Hpi$ generating a frame $\pi(\Lambda) g$. Central results of this type are the various density theorems for coherent systems of amenable groups obtained in, e.g., \cite{fuehr2017density, caspers2023overcompleteness, mitkovski2020density, enstad2022coherent, enstad2022dynamical}, which are analogous to the classical density theorems for exponential systems \cite{landau1967necessary} and Gabor systems \cite{ramanathan1995incompleteness}. To be more explicit, given an $L^2$-localized vector $g \in \mathcal{D}_{\pi}$ (cf. \Cref{sec:analyzing}), the necessary density condition for coherent frames yields that if $\Lambda \subseteq G$ is such that $\pi (\Lambda) g$ is a frame for $\Hpi$, then its lower Beurling density 
\begin{align} \label{eq:density_intro}
D^- (\Lambda) := \lim_{n \to \infty} \inf_{x \in G} \frac{\#(\Lambda \cap xK_n)}{\mu_G (K_n)} \geq d_{\pi},
\end{align}
where $d_{\pi} > 0$ is the formal degree of $\pi$ and $(K_n)_{n \in \mathbb{N}}$ is any strong F\o lner sequence in $G$; see \Cref{sec:prelim} for the precise definitions of both notions. Necessary density conditions of the form \eqref{eq:density_intro} can be obtained from results on metric measure spaces in \cite{fuehr2017density, mitkovski2020density} whenever $G$ has polynomial growth, and can be found in 
\cite{caspers2023overcompleteness, enstad2022coherent, enstad2022dynamical} for amenable unimodular groups with possibly exponential volume growth. 

The main aim of the present paper is to 
derive various uniform estimates for the asymptotic behavior of the counting functions
$
\# (\Lambda \cap  xK_n)
$, with $x \in G$ and $n \to \infty$,
for sets $\Lambda \subseteq G$ yielding a frame $\pi (\Lambda) g$. Our first result in this direction is the following abstract theorem.

\begin{theorem} \label{thm:uniform_intro}
Let $(\pi, \Hpi)$ be an irreducible, square-integrable projective representation of a second countable unimodular group $G$ of formal degree $d_{\pi}>0$. Let $(K_n)_{n \in \mathbb{N}}$ be a strong F\o lner exhaustion sequence in $G$ and let $Q \subseteq G$ be a relatively compact symmetric unit neighborhood.

If there exists $g \in \mathcal{D}_{\pi}$ and $\Lambda \subseteq G$ such that $\pi(\Lambda) g$ is a frame, then
\begin{align} \label{eq:counting_intro}
\inf_{x \in G} \# (\Lambda \cap x K_n) \geq d_{\pi} \big(  \mu_G (K_n) - C I_n\big) , \quad n \in \mathbb{N},
\end{align}
for some constant $C > 0$ depending on $g, Q$ and the frame bounds, and
\[
I_n := \int_{K_n }  \int_{K_n^c Q} \; \sup_{q \in Q} |\langle g, \pi (y^{-1} z q) g \rangle |^2 \; d\mu_G (z)  d\mu_G (y) = o\big(\mu_G (K_n)\big).
\]
\end{theorem}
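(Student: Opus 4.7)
The plan is to proceed via a Heil--Ramanathan--Steger style comparison between a continuous analogue of the frame operator restricted to $xK_n$ and the rank of a discrete subspace built from the canonical dual of $\pi(\Lambda)g$, with the error controlled by a quantitative homogeneous approximation estimate. Fix $x \in G$ and introduce the positive, self-adjoint operator
\[
T := \int_{xK_n} \langle \cdot,\pi(y)g\rangle\,\pi(y)g\, d\mu_G(y).
\]
First I would record two ingredients: unimodularity gives $\operatorname{tr}(T) = \int_{xK_n}\|\pi(y)g\|^2\, d\mu_G(y) = \|g\|^2 \mu_G(K_n)$, while the orthogonality relations applied to the analogous integral over all of $G$ yield $T \leq d_{\pi}^{-1}\|g\|^2\, I$, hence $\|T\|_{\mathrm{op}} \leq d_{\pi}^{-1}\|g\|^2$. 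Then, with $\gamma_\lambda := \frameop^{-1}\pi(\lambda)g$ the canonical dual frame vectors and $W := \overline{\operatorname{span}}\{\gamma_\lambda : \lambda \in \Lambda \cap xK_n\}$, the bound $\dim W \leq \#(\Lambda \cap xK_n)$ would give the key inequality
\[
\operatorname{tr}(TP_W) \leq \|T\|_{\mathrm{op}}\, \dim W \leq d_{\pi}^{-1}\|g\|^2\, \#(\Lambda \cap xK_n),
\]
where $P_W$ is the orthogonal projection onto $W$.

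Next I would estimate the complementary trace $\operatorname{tr}(T(I-P_W))$ using the dual-frame expansion $\pi(y)g = \sum_\lambda \langle g, \pi(y^{-1}\lambda)g\rangle\, \gamma_\lambda$. Since the terms with $\lambda \in \Lambda \cap xK_n$ belong to $W$, the Bessel bound $\|\sum_\lambda a_\lambda \gamma_\lambda\|^2 \leq A^{-1}\sum_\lambda |a_\lambda|^2$ for the dual frame yields
\[
\|(I-P_W)\pi(y)g\|^2 \;\leq\; A^{-1}\sum_{\lambda \in \Lambda \setminus xK_n}|\langle g,\pi(y^{-1}\lambda)g\rangle|^2.
\]
Integrating over $y \in xK_n$, substituting $u = x^{-1}y$ and $z = x^{-1}\lambda$, and invoking the $Q$-relative separation $N := \sup_{w \in G}\#(\Lambda \cap wQ) < \infty$ (forced by the Bessel property and depending on $g, Q, B$), the standard packing inequality $\mu_G(Q)\,h(z) \leq \int_{zQ}\sup_{q \in Q}h(wq)\, d\mu_G(w)$ applied with $h(z) = \int_{K_n}|\langle g, \pi(u^{-1}z)g\rangle|^2\, d\mu_G(u)$, together with $K_n^c \subseteq K_n^c Q$ and the swap $\sup_q \int \leq \int \sup_q$, should produce $\operatorname{tr}(T(I-P_W)) \leq (N/(A\mu_G(Q)))\, I_n$. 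Substituting into $\operatorname{tr}(T) = \operatorname{tr}(TP_W) + \operatorname{tr}(T(I-P_W))$ and rearranging would then give the estimate with $C = N\bigl(A\|g\|^2\mu_G(Q)\bigr)^{-1}$.

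To verify $I_n = o(\mu_G(K_n))$, I would apply Fubini and the substitution $v = y^{-1}z$ to rewrite
\[
I_n \;=\; \int_G F(v)\, \mu_G\bigl(\{y \in K_n : yv \in K_n^c Q\}\bigr)\, d\mu_G(v), \qquad F(v) := \sup_{q \in Q}|\langle g,\pi(vq)g\rangle|^2.
\]
The hypothesis $g \in \mathcal{D}_{\pi}$ should guarantee $F \in L^1(G)$, furnishing an integrable dominator, while for each fixed $v$ the strong F\o lner property of $(K_n)$ forces $\mu_G(\{y \in K_n : yv \in K_n^c Q\})/\mu_G(K_n) \to 0$; dominated convergence then concludes.

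The main technical obstacle I anticipate is the packing step: tracking exactly the region $K_n^c Q$ of integration in $z$ and the supremum over $q \in Q$ placed inside the integration over $y$, so that the estimate for $\operatorname{tr}(T(I-P_W))$ matches the precise form of $I_n$ rather than a larger averaged quantity. Getting this bookkeeping right relies on $Q$ being symmetric and containing the identity, so that the change-of-variable and the inclusion $K_n^c \subseteq K_n^c Q$ line up with the definition of $I_n$.
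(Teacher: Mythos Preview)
Your proposal is correct and is essentially the paper's own proof recast in operator-trace language: your identities $\operatorname{tr}(T)=\|g\|^2\mu_G(K_n)$, $\operatorname{tr}(TP_W)=\int_{xK_n}\|P_W\pi(y)g\|^2\,d\mu_G(y)$, and $\operatorname{tr}(T(I-P_W))=\int_{xK_n}\|P_{W^\perp}\pi(y)g\|^2\,d\mu_G(y)$ reduce Steps~1--3 exactly to the paper's decomposition, with your bound $\operatorname{tr}(TP_W)\le\|T\|_{\mathrm{op}}\dim W$ playing the role of the paper's dimension lemma and your packing inequality being precisely the amalgam estimate (Lemma~2.1). The one genuine variation is your treatment of $I_n=o(\mu_G(K_n))$: rather than the paper's explicit three-term splitting $I_n=I_n^{(1)}+I_n^{(2)}+I_n^{(3)}$ along a compact exhaustion, you invoke dominated convergence after the Fubini rewrite $I_n=\int_G F(v)\,\mu_G(K_n\cap K_n^cQv^{-1})\,d\mu_G(v)$, using $F=|M_QV_gg|^2\in L^1(G)$ as the dominator and the strong F\o lner property (applied with the compact set $K'=\{e\}\cup\overline{Q}v^{-1}$) for pointwise convergence --- this is slightly slicker but yields no quantitative information, whereas the paper's splitting is what gets refined in Section~4 to produce explicit error rates.
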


\Cref{thm:uniform_intro} implies, in particular, the necessary density condition \eqref{eq:density_intro} for coherent frames  as obtained in \cite{fuehr2017density, caspers2023overcompleteness, mitkovski2020density, enstad2022coherent, enstad2022dynamical}, and is valid even for groups of exponential growth. The proof of \Cref{thm:uniform_intro} provided in this paper appears to be the simplest even for showing the necessary density condition \eqref{eq:density_intro}. Our approach towards \Cref{thm:uniform_intro} is strongly influenced by the point of view in \cite{ahn2018density} for studying the density of Gabor systems $\pi (\Lambda) g$ with $\Lambda \subseteq \mathbb{R}^2$ and $g \in L^2 (\mathbb{R})$. Despite similarities in the overall proof approach, we mention that various alternative arguments and essential modifications are needed to obtain \Cref{thm:uniform_intro} beyond the case of Euclidean space. For example, in contrast to the Euclidean case, in the setting of \Cref{thm:uniform_intro} subaveraging properties of matrix coefficients are generally not available and sequences of balls might not form F\o lner sequences.

Given \Cref{thm:uniform_intro}, our aim is to provide more precise estimates for the ``error term" $I_n = o(\mu_G (K_n))$ in \Cref{eq:counting_intro} under additional assumptions on the F\o lner sequence and vector $g \in \Hpi$ defining a frame. For this, we assume that the group $G$ is compactly generated and of polynomial growth, meaning that $\mu_G (U^n) \lesssim n^D$ for a compact symmetric generating set $U$.
In this setting, we consider F\o lner sequences of the form $K_n = \overline{B_{r_n}}$ for metric balls $B_{r_n}$ defined by a so-called \emph{periodic metric}  satisfying the annular decay property. We refer to \Cref{sec:metric} for the precise definitions of these notions, but let us mention here that such metrics include word metrics and, in case $G$ is a connected Lie group, Carnot-Carath\'eodory and Riemannian metrics.

\begin{theorem}\label{thm:intro2} 
 Let $G$ be a group of polynomial growth with growth exponent $D$ and let $d$ be a periodic metric satisfying the $\delta$-annular decay property for some $\delta \in (0,1]$.
 Let $(\overline{B_{r_n}})_{n \in \mathbb{N}}$ be any strong F\o lner exhaustion sequence associated to $d$.

Suppose there exists $g \in \Hpi $, constants $\alpha, C_0 > 0$ and $\beta > 1 - \delta$ such that 
\begin{align} \label{eq:localisation_intro}
|\langle g, \pi (x) g \rangle | \leq C_0 \big(1+ d(x,e) \big)^{-\frac{D+\alpha + \beta}{2}} \quad \text{for all} \quad x \in G. 
\end{align}
If $\Lambda \subseteq G$ is such that $\pi (\Lambda) g$
is a frame for $\Hpi$, then 
\[
\inf_{x \in G } \# ( \Lambda \cap x\overline{B_{r_n}} )  \geq d_{\pi} \bigg( 1 - C r_n^{-\frac{\alpha\delta}{\delta+\alpha}}  \bigg)  \mu_G (\overline{B_{r_n}}), \quad n \gg 1.
\]
The constant $C>0$ depends on $g, \alpha, \beta, d, C_0$ and the frame bounds.
\end{theorem}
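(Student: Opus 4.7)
The plan is to apply \Cref{thm:uniform_intro} to the F\o lner exhaustion $K_n = \overline{B_{r_n}}$ with a fixed relatively compact symmetric unit neighborhood $Q$, and then to estimate the resulting error integral $I_n$ using the pointwise decay \eqref{eq:localisation_intro} together with the volume properties of $d$. Because $\mu_G(\overline{B_{r_n}}) \asymp r_n^D$ for a periodic metric on a group of polynomial growth, it suffices to establish the bound $I_n \lesssim r_n^{D - \alpha\delta/(\alpha+\delta)}$.

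For this, I would substitute $w = y^{-1}z$ in the inner integral. Left-invariance of Haar measure and of $d$, together with $d(wq, w) = d(q, e) \leq \mathrm{diam}_d(Q)$, give
\[
\sup_{q \in Q}|\langle g, \pi(wq) g\rangle|^2 \lesssim \bigl(1 + d(w, e)\bigr)^{-(D+\alpha+\beta)}
\]
and
\[
y^{-1} \overline{B_{r_n}}^c Q \subseteq \{w \in G : d(w, e) > r_n - d(y, e) - \mathrm{diam}_d(Q)\}.
\]
Partitioning $\{w : d(w,e) > R\}$ into unit-width annuli and combining polynomial growth with the $\delta$-annular decay (which together yield $\mu_G(B_{R+1} \setminus B_R) \lesssim R^{D-\delta}$) produces the key tail bound
\[
\int_{\{d(w,e) > R\}} (1 + d(w,e))^{-(D+\alpha+\beta)} \, d\mu_G(w) \lesssim (1+R)^{-\gamma}, \qquad \gamma := \alpha + \beta + \delta - 1.
\]
The hypothesis $\beta > 1 - \delta$ enters precisely here: it guarantees $\gamma > \alpha > 0$ and hence convergence of the underlying sum.

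Next, I would split the outer integration by introducing a parameter $s \in (\mathrm{diam}_d(Q), r_n)$. For $y \in \overline{B_{r_n - s}}$ the tail bound applies with $R \gtrsim s$, yielding an inner contribution of size $O(s^{-\gamma})$; for $y$ in the thin boundary annulus $\overline{B_{r_n}} \setminus \overline{B_{r_n - s}}$, bound the inner integral crudely by the finite constant $\int_G (1+d(w,e))^{-(D+\alpha+\beta)}\,d\mu_G(w) < \infty$ and control the measure of the annulus by $s^\delta r_n^{D-\delta}$ via the $\delta$-annular decay. Summing these two contributions yields
\[
I_n \lesssim s^{-\gamma} r_n^D + s^\delta r_n^{D-\delta},
\]
and the optimal balance $s \asymp r_n^{\delta/(\delta+\gamma)}$ produces $I_n \lesssim r_n^{D - \delta\gamma/(\delta+\gamma)}$. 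Since $t \mapsto t\delta/(t+\delta)$ is increasing and $\gamma > \alpha$, this is dominated by $r_n^{D - \alpha\delta/(\alpha+\delta)}$, as required.

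The main obstacle lies in the splitting above: the sharp tail estimate deteriorates near the boundary of $\overline{B_{r_n}}$, where the effective radius $R$ collapses, and only the $\delta$-annular decay of $d$ allows the resulting boundary contribution to be absorbed. The balance between the bulk and the boundary terms is precisely what produces the exponent $\alpha\delta/(\alpha+\delta)$ in the final statement; the hypothesis $\beta > 1-\delta$ itself contributes only through the convergence of the dyadic tail series, and does not affect the final rate.
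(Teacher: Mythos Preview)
Your argument is correct and structurally parallel to the paper's, but the reduction of the double integral $I_n$ is organized differently. The paper first shows (in a separate lemma) that the pointwise decay \eqref{eq:localisation_intro} forces $M_Q V_g g \in L^2_{w_\alpha}(G)$; it then inserts the weight $(1+|y^{-1}z|)^{\alpha}$ into $I_n$, uses the triangle inequality $1+|y^{-1}z| \gtrsim 1 + r_n + r_0 - |x|$, and integrates out $z$ against $\|M_Q V_g g\|_{L^2_{w_\alpha}}^2$, arriving at the single integral $\int_{B_{s_n}} (1+s_n-|x|)^{-\alpha}\,d\mu_G(x)$, which is then split at $|x| = s_n - s_n^{\delta/(\delta+\alpha)}$. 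You instead estimate the inner integral directly by the annular tail bound, obtaining the exponent $\gamma = \alpha+\beta+\delta-1$ rather than $\alpha$, and only relax to the stated $\alpha\delta/(\alpha+\delta)$ at the very end. Your route is a bit more elementary (no intermediate weighted $L^2$ space) and in fact yields the marginally sharper error $r_n^{-\delta\gamma/(\delta+\gamma)}$; the paper's route, on the other hand, isolates the hypothesis $g \in \mathcal{D}_{\pi,w_\alpha}$ as the operative condition, which is more general than the pointwise decay and is what the paper actually states in its main theorem in that section.
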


\Cref{thm:intro2} resembles the ``local" counting function estimates for exponential frames on unions of intervals first obtained in \cite[Theorem 1]{landau1967necessary}. For Gabor systems in $L^2 (\mathbb{R})$, a result similar to \Cref{thm:intro2} has been shown in \cite[Theorem 1]{ahn2018density}. 
The novelty of \Cref{thm:intro2} is that it provides the first instance of local counting function estimates on noncommutative groups. In such settings, a compatibility between the metric geometry of locally compact groups and the decay properties of matrix coefficients appears essential. 

In addition to the results on coherent frames stated in \Cref{thm:uniform_intro} and \Cref{thm:intro2}, the present paper also provides dual statements on Riesz sequences; see \Cref{thm: lower bound riesz} and \Cref{thm: lower bound frame PG}.

Lastly, we provide a quantitative version of the well-known fact that the point set of a coherent frame must be relatively dense. More precisely, the following theorem shows that the distance between distinct points of a frame index set is determined by the frame condition number (ratio of upper and lower frame bound). 

\begin{theorem} \label{thm:conditionnumber_intro}
With notation as in \Cref{thm:intro2}, let $r_0 > 1$ be such that $B_{r_0}$ contains a unit neighborhood.
Suppose there exists nonzero 
$g \in \Hpi $ and constants $C_0 > 0$, $\alpha>1-\delta$  such that 
\begin{align} \label{eq:localisation}
\frac{|\langle g, \pi (x) g \rangle |}{\| g \|^2} \leq C_0 \big( 1+ d(x,e) \big)^{-\frac{D+\alpha}{2}} \quad \text{for all} \quad x \in G.
\end{align} 
If $\Lambda \subseteq G$ is such that $\pi(\Lambda) g$ is a frame for $\Hpi$ with frame bounds $A,B$, then there exists  $C = C(G, g, \alpha, r_0,\delta) >0$ such that if $r>r_0$ with 
\begin{align} \label{eq:radius_estimate_intro}
r > \bigg( C_0^2\, C\, \frac{B}{A} \bigg)^{1/(\alpha+\delta-1)}, 
\end{align}
then $\Lambda \cap B_r (z) \neq \emptyset$ for any $z \in G$. 

The constant $C$ is invariant under multiplying $g$ by a constant in $ \mathbb{C} \setminus \{0\}$, and hence so is the right-hand side of \eqref{eq:radius_estimate_intro}.
\end{theorem}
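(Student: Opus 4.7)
The plan is to argue by contradiction: suppose that $\Lambda \cap B_r(z) = \emptyset$ for some $z \in G$ and derive the negation of \eqref{eq:radius_estimate_intro}. Testing the lower frame bound against $f = \pi(z)g$, and using that projectivity of $\pi$ gives $|\langle \pi(z)g, \pi(\lambda)g\rangle| = |\langle g, \pi(z^{-1}\lambda)g\rangle|$ together with the invariance properties of the periodic metric (which identify $d(z^{-1}\lambda, e)$ with $d(\lambda, z)$), the assumption $d(\lambda, z) \geq r$ for all $\lambda \in \Lambda$ combines with \eqref{eq:localisation} to yield
\[
A \;\leq\; C_0^2\,\|g\|^2 \sum_{\lambda \in \Lambda,\; d(\lambda,z)\geq r} \bigl(1+d(\lambda,z)\bigr)^{-(D+\alpha)}.
\]

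The next step is to replace this sum by the corresponding integral, at the cost of a constant that captures the dependence on $B$. Using strong continuity of $\pi$, fix a compact symmetric neighborhood $V \subset B_{r_0}$ of $e$ such that $|\langle g, \pi(x)g\rangle| \geq \|g\|^2/2$ for every $x \in V\cdot V$. Applying the upper frame bound to the test vectors $\pi(\mu)g$ for $\mu \in \Lambda$ and restricting the resulting sum to indices with $\mu^{-1}\lambda \in V^2$ then produces the uniform local density estimate $\#(\Lambda \cap xV) \leq 4B/\|g\|^2$ for every $x \in G$. Since the weight $(1+d(\cdot,z))^{-(D+\alpha)}$ varies only by a bounded factor on translates of the fixed neighborhood $V$, a standard covering argument combined with this density bound yields
\[
\sum_{\lambda \in \Lambda,\; d(\lambda,z)\geq r} \bigl(1+d(\lambda,z)\bigr)^{-(D+\alpha)} \;\lesssim\; \frac{B}{\|g\|^2\,\mu_G(V)}\int_{d(y,z)\geq r} \bigl(1+d(y,z)\bigr)^{-(D+\alpha)}\,d\mu_G(y).
\]

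Finally, the $\delta$-annular decay property combined with polynomial growth $\mu_G(B_R)\lesssim R^D$ yields the shell estimate $\mu_G(\{y : R \leq d(y,z) < R+1\}) \lesssim R^{D-\delta}$ uniformly in $z$, whence
\[
\int_{d(y,z)\geq r}\bigl(1+d(y,z)\bigr)^{-(D+\alpha)}\,d\mu_G(y) \;\lesssim\; \int_r^{\infty} R^{-(\alpha+\delta)}\,dR \;=\; \frac{r^{-(\alpha+\delta-1)}}{\alpha+\delta-1},
\]
which is finite precisely because $\alpha > 1-\delta$. Chaining the three displays produces $A \leq C\, C_0^2\, B\, r^{-(\alpha+\delta-1)}$ with a constant $C = C(G,g,\alpha,r_0,\delta)$, and rearranging contradicts \eqref{eq:radius_estimate_intro}. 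Scale invariance of $C$ under $g \mapsto cg$ is immediate, since both $V$ (defined via the ratio $|\langle g, \pi(x)g\rangle|/\|g\|^2$) and the local density quotient $B/\|g\|^2$ depend only on $g/\|g\|$, while $C_0$ is scale-free by definition. The main obstacle is performing the sum-to-integral comparison while retaining an explicit constant proportional to $B$; it is exactly this dependence that produces the condition number $B/A$ in the radius estimate.
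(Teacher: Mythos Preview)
Your argument is correct and follows essentially the same route as the paper: test the lower frame inequality at $\pi(z)g$, use the Bessel bound to obtain a relative-separation constant proportional to $B/\|g\|^2$, convert the tail sum to an integral, and estimate that integral via the $\delta$-annular decay property to produce the factor $r^{1-\delta-\alpha}$. The paper differs only cosmetically---it packages the sum-to-integral comparison through the local maximal function $M_Q V_g g$ and its amalgam lemma rather than applying the pointwise decay first---and one small caveat on your side: a projective representation need not be strongly continuous, but what you actually use to choose $V$ is the continuity of $|V_g g|$, which the paper establishes (citing Varadarajan).
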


For exponential frames on adequate bounded domains, it was shown in \cite{iosevich2000how, iosevich2006weyl} that the spectral gaps are determined by the frame condition number (and the geometry of the domain). \Cref{thm:conditionnumber_intro} shows a similar phenomenon for general coherent frames and seems to have not appeared in the literature even for the special case of Gabor systems. 

In addition to \Cref{thm:conditionnumber_intro}, it would be interesting to obtain a quantitative version of the fact that the point set of a coherent Riesz sequence must be separated, meaning that there exists a minimal distance between distinct points. To the best of our knowledge, a result of this type is currently only known for exponential systems on intervals \cite{lindner2000universal}.

The organization of the paper is as follows. \Cref{sec:prelim} covers the essential background and notation used throughout the paper. Estimates of the counting functions and density conditions for coherent frames and Riesz sequences of amenable unimodular groups are provided in \Cref{sec:counting_amenable}. Our main results for groups of polynomial growth are proven in \Cref{sec:PG} and \Cref{sec:hole}. The appendix collects some results on matrix coefficients of nilpotent Lie groups.

\subsection*{Notation}
For two functions $f_1, f_2 : X \to [0, \infty)$ on a set $X$, we write $f_1 \lesssim f_2$ whenever there exists $C>0$ such that $f_1(x) \leq C f_2 (x)$ for all $x \in X$. We write $f_1\asymp f_2$ whenever there exist $C_1,C_2>0$ such that $C_1 f_1(x)\leq f_2(x) \leq C_2 f_1 (x)$ for all $x \in X$. Subscripted variants such as $f_1 \lesssim_{\alpha, \beta} f_2$ indicate that the implicit constant depends on quantities $\alpha, \beta$. For a subset $Y\subseteq X$, we write $Y^c$ for its complement, $\overline{Y}$ for its closure and $Y^{\circ}$ for its interior in $X$. Finally, we write $n \gg 1$ if there exists $C>1$ such that $n \geq C $.

\section{Preliminaries} \label{sec:prelim}
Throughout this paper, $G$ denotes a unimodular second countable locally compact group with Haar measure $\mu_G$. 

\subsection{F\o lner sequences}
A \emph{(right) strong F\o lner sequence} is a sequence $(K_n)_{n \in \N}$ of nonnull compact sets $K_n \subseteq G$ satisfying 
\begin{align} \label{eq:strong_folner}
\lim_{n \to \infty} \frac{\mu_G( K_n K \cap K_n^c K)}{\mu_G (K_n)} = 0
\end{align}
for all compact sets $K \subseteq G$, where $K_n^c$ denotes the complement $(K_n)^c$ of $K_n$ in $G$. In particular, any such sequence $(K_n)_{n \in \mathbb{N}}$ satisfies the ordinary (right) \emph{F\o lner condition}: For all compact sets $K \subseteq G$,
\[
\lim_{n \to \infty} \frac{\mu_G (K_n \Delta K_n K)}{\mu_G (K_n)} = 0,
\]
where $K_n \Delta K_n K$ denotes the symmetric difference between $K_n$ and $K_n K$.

A strong F\o lner sequence exists in any amenable unimodular group; in fact, the existence of a strong F\o lner sequence implies that a locally compact group is amenable and unimodular, cf. \cite[Proposition 11.1]{tessera2008large}.
In addition, there exist strong F\o lner sequences with the additional properties that $G = \bigcup_{n \in \mathbb{N}} K_n$ and $\{ e\} \subseteq K_n \subseteq (K_{n+1})^{\circ}$ for all $n \in \mathbb{N}$; see, e.g., \cite[Proposition 5.10]{pogorzelski2022leptin} or \cite[Proposition 3.4]{beckus2021linear}. Any strong F\o lner sequence satisfying these additional properties will be called a \emph{strong F\o lner exhaustion sequence}. We refer to \Cref{sec:metric} for the construction of such sequences on groups of polynomial growth.

\subsection{Point sets}
A set $\Lambda \subseteq G$ is called \emph{relatively separated} if, for some relatively compact symmetric unit neighborhood $U \subseteq G$,
\begin{align} \label{eq:relsep}
 \sup_{x \in G} \# (\Lambda \cap x U) < \infty. 
\end{align}
If $\Lambda$ satisfies \eqref{eq:relsep} for some relatively compact unit neighborhood $U$, then it satisfies it for all such neighborhoods.

For a relatively separated set $\Lambda$, we denote its relative separation with respect to a fixed relatively compact symmetric unit neighborhood $Q$  by $\Rel_Q(\Lambda) := \sup_{x \in G} \# (\Lambda \cap x Q)$.  

A discrete set $\Lambda \subseteq G$ is said to be \emph{relatively dense} if there exists a relatively compact set $U \subseteq G$ such that $G = \bigcup_{\lambda \in \Lambda} \lambda U$.

\subsection{Square-integrable representations}
A projective unitary representation $(\pi, \Hpi)$ of $G$ on a separable Hilbert space $\Hpi$ is a strongly measurable map $\pi : G \to \mathcal{U}(\Hpi)$ satisfying 
\[
\pi(x) \pi(y) = \sigma(x,y) \pi(x y), \quad x,y \in G,
\]
for a function $\sigma : G \times G \to \mathbb{T}$, called the \emph{cocycle} of $\pi$. A projective representation $(\pi, \Hpi)$ with cocycle $\sigma$ will sometimes also be called a \emph{$\sigma$-representation}. Any $1$-representation (i.e., $\sigma \equiv 1$) is automatically strongly continuous, see, e.g., \cite[Lemma 5.28]{varadarajan1985geometry}.

For a vector $g \in \Hpi$, the associated coefficient transform $V_g : \Hpi \to L^{\infty} (G)$ is defined through the matrix coefficients
\[
V_g f (x) = \langle f, \pi(x) g \rangle, \quad x \in G.
\]
By \cite[Lemma 7.1, Theorem 7.5]{varadarajan1985geometry}, the absolute value $|V_g f| : G \to \mathbb{C}$ is continuous for all $f, g \in \Hpi$. 
The representation $(\pi, \Hpi)$ is said to be \emph{irreducible} if $\{0\}$ and $\Hpi$ are the only closed subspaces of $\Hpi$ invariant under all operators $\pi(x)$ for $x \in G$.

An irreducible projective representation $(\pi, \Hpi)$ is called \emph{square-integrable} or a (projective) \emph{discrete series representation} of $G$ if there exists $g \in \Hpi \setminus \{0\}$ such that 
\begin{align} \label{eq:SI}
 \int_G | \langle g, \pi(x) g \rangle |^2 \; d\mu_G (x) < \infty.
\end{align}
In this case, there exists a unique $d_{\pi} > 0$, called the \emph{formal degree} of $\pi$, such that
\begin{align} \label{eq:ortho}
\int_G \langle f_1, \pi(x) g_1 \rangle \langle \pi(x) g_2, f_2 \rangle \; d\mu_G (x) = d_{\pi}^{-1} \langle f_1, f_2 \rangle \overline{\langle g_1, g_2 \rangle}
\end{align}
for all $f_1, f_2, g_1, g_2 \in \Hpi$, see, e.g, \cite[Theorem 2]{aniello2006square}.

For the purposes of the present paper, the use of (projective) representations that are square-integrable in the strict sense of \Cref{eq:SI} are most convenient. Alternatively, one could use $1$-representations $\pi$ that are square-integrable modulo their projective kernel $\pker(\pi) = \{ x \in G : \pi(x) \in \mathbb{T} I_{\Hpi} \}$. Any such latter representation can, however, be treated as a projective representation of $G/\pker(\pi)$ that is square-integrable in the strict sense of \Cref{eq:SI}; see \Cref{sec:nilpotent} for more details. 
We mention that square-integrable $1$-representations  do not exist for simply connected nilpotent Lie groups or, more generally, simply connected unimodular solvable Lie groups, cf. \cite[Corollary 4.2]{beltita2024square}.

\subsection{Local maximal functions}
For a relatively compact symmetric unit neighborhood $Q$, the associated local maximal function $M_Q F : G \to [0, \infty)$ of a function $F \in L^{\infty}_{\loc} (G)$ is defined by
\[
M_Q F (x) = \sup_{z \in Q} |F(x z)|, \quad x \in G.
\]
The function $M_Q F$ is measurable (resp. continuous) for $F \in L^{\infty}_{\loc} (G)$ (resp. $F \in C(G)$).

The significance of a local maximal function for our purposes is due to the following estimate, which is an adaption of \cite[Lemma 1]{grochenig2008homogeneous}.

\begin{lemma} \label{lem:amalgam}
    Let $Q$ be a relatively compact symmetric unit neighborhood and  $F : G \to \mathbb{C}$ be continuous. For any relatively separated set $\Lambda \subseteq G$ and relatively compact set $K \subseteq G$, 
    \[
    \sum_{\lambda \in \Lambda \cap K} |F(\lambda)|^2 \leq \frac{\Rel_Q (\Lambda)}{\mu_G (Q)} \int_{K Q} |M_Q F (x) |^2 \; d\mu_G (x)
    \]
    and
    \[
    \sum_{\lambda \in \Lambda \cap K^c} |F(\lambda)|^2 \leq \frac{\Rel_Q (\Lambda)}{\mu_G (Q)} \int_{K^c Q} |M_Q F (x) |^2 \; d\mu_G (x).
    \]
\end{lemma}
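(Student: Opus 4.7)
The plan is to reduce the sum over $\Lambda$ to an integral over a neighborhood of each point $\lambda$, exploiting the fact that $M_Q F$ dominates $|F(\lambda)|$ on the shifted set $\lambda Q$ by symmetry of $Q$, and then to control the multiplicity of the resulting cover by $\Rel_Q(\Lambda)$.

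\textbf{Step 1 (pointwise domination).} For any $\lambda \in \Lambda$ and $q \in Q$, I would observe that since $Q = Q^{-1}$, one has $q^{-1} \in Q$, and so
\[
M_Q F(\lambda q) \;=\; \sup_{z \in Q} |F(\lambda q z)| \;\geq\; |F(\lambda q q^{-1})| \;=\; |F(\lambda)|.
\]
Squaring and integrating over $q \in Q$, the left-invariance (and unimodularity) of $\mu_G$ gives
\[
|F(\lambda)|^2 \,\mu_G(Q) \;\leq\; \int_Q |M_Q F(\lambda q)|^2 \, d\mu_G(q) \;=\; \int_{\lambda Q} |M_Q F(y)|^2 \, d\mu_G(y).
\]

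\textbf{Step 2 (summation and overlap count).} Summing this pointwise bound over $\lambda \in \Lambda \cap K$ and interchanging sum and integral (permissible by positivity), I get
\[
\sum_{\lambda \in \Lambda \cap K} |F(\lambda)|^2 \;\leq\; \frac{1}{\mu_G(Q)} \int_G \bigg( \sum_{\lambda \in \Lambda \cap K} \mathbf{1}_{\lambda Q}(y) \bigg) |M_Q F(y)|^2 \, d\mu_G(y).
\]
For each fixed $y \in G$, the condition $y \in \lambda Q$ is equivalent (again using $Q = Q^{-1}$) to $\lambda \in y Q$, so
\[
\sum_{\lambda \in \Lambda \cap K} \mathbf{1}_{\lambda Q}(y) \;=\; \#(\Lambda \cap K \cap y Q) \;\leq\; \#(\Lambda \cap y Q) \;\leq\; \Rel_Q(\Lambda).
\]
Moreover, the integrand vanishes outside $(\Lambda \cap K)\, Q \subseteq KQ$, so the effective domain of integration is $KQ$. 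This yields the first displayed inequality.

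\textbf{Step 3 (the complementary case).} For the second inequality, I would run exactly the same argument with $K$ replaced by $K^c$; the only change is that the sets $\lambda Q$, $\lambda \in \Lambda \cap K^c$, are contained in $K^c Q$, so the effective domain of integration becomes $K^c Q$.

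I do not anticipate a genuine obstacle here: the single delicate point is ensuring that the ``shift'' $\lambda \mapsto \lambda q$ used in Step~1 is compatible with $M_Q$, which is exactly what the symmetry assumption $Q = Q^{-1}$ buys us. Everything else is Tonelli and a trivial overlap count.
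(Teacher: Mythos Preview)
Your proof is correct and follows essentially the same approach as the paper's: bound $|F(\lambda)|$ by $M_Q F$ on $\lambda Q$, average over $\lambda Q$, sum, and control the overlap by $\Rel_Q(\Lambda)$. You are slightly more explicit than the paper in spelling out why the symmetry $Q=Q^{-1}$ is needed for the pointwise domination step, but the argument is otherwise identical.
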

\begin{proof}
    By assumption, $\sup_{x \in G} \sum_{\lambda \in \Lambda} \mathds{1}_{\lambda Q} (x) = \Rel_Q (\Lambda) < \infty$.  Since $| F(\lambda) | \leq M_Q F(x)$ for $x \in \lambda Q$, averaging over $\lambda Q$ gives
    \[
    |F(\lambda)|^2 \leq \frac{1}{\mu_G (Q)} \int_{\lambda Q} |M_Q F(x)|^2 \; d\mu_G (x),
    \]
    and hence
    \begin{align*}
    \sum_{\lambda \in \Lambda \cap  K} |F(\lambda)|^2 &\leq \frac{1}{\mu_G(Q)} \int_{K Q} \sum_{\lambda \in \Lambda \cap K} \mathds{1}_{\lambda Q} (x) |M_Q F(x) |^2 \; d\mu_G (x) \\
    &\leq \frac{\Rel_Q (\Lambda)}{\mu_G (Q)} \int_{KQ} |M_Q F(x) |^2 \; d\mu_G (x).
    \end{align*}
    The other estimate is shown similarly; see also \cite[Lemma 1]{grochenig2008homogeneous}.
\end{proof}

\subsection{Localized vectors} \label{sec:analyzing}
For a measurable function $w:G\to[1,\infty)$ that is submultiplicative, meaning that $w(xy) \leq w(x) w(y)$ for $x,y \in G$, we denote by $L^2_w(G)$ the space of $F \in L^2 (G)$ such that $\int_G |F(x)|^2 w(x) \; d\mu_G (x) < \infty$.
Given such a weight $w$ and a (projective) discrete series representation $(\pi, \Hpi)$ of $G$, we define the set $\mathcal{D}_{\pi, w}$ by
\[
\mathcal{D}_{\pi, w} = \big\{ g \in \Hpi : M_Q V_g g \in L^2_w (G) \big\},
\] 
where $M_Q V_gg$ denotes the local maximal function of $V_g g$. The set $\mathcal{D}_{\pi, w}$ can be shown to be independent of the choice of neighborhood $Q$, see, e.g., \cite[Corollary 4.2]{rauhut2007wiener}.
We simply write $\mathcal{D}_{\pi} = \mathcal{D}_{\pi, w}$ whenever $w \equiv 1$, in which case the space $\mathcal{D}_{\pi}$ can be shown to be norm dense (see, e.g., \cite[Remark 1]{grochenig2008homogeneous}).

For projective representations of connected, simply connected nilpotent Lie groups, the set of vectors $g$ that belong to $\mathcal{D}_{\pi, w}$ for certain polynomial weights $w$ is dense in $\Hpi$; see \Cref{sec:polynomial_decay} and \Cref{sec:nilpotent} for more details.

\section{Counting function estimates for coherent systems} \label{sec:counting_amenable}
Throughout the remainder of this paper, we will denote by $(\pi, \Hpi)$ a projective discrete series representation of $G$ of formal degree $d_{\pi} > 0$. For a discrete set $\Lambda \subseteq G$ and a nonzero vector $g \in \Hpi$, we consider the associated \emph{coherent system}
\[
\pi(\Lambda) g = \{ \pi(\lambda) g : \lambda \in \Lambda \}.
\]
We consider the system $\pi(\Lambda)$ as a family indexed by $\Lambda$, which may contain repetitions.

We start with the following observation, which will be used repeatedly in the remainder. Its proof is an adaption of \cite[Lemma 7]{ahn2018density}.

\begin{lemma} \label{lem:dimension}
Let $g \in \Hpi \setminus \{0\}$. 
If $V \subseteq \Hpi$ is a finite dimensional subspace and $P_V$ is the orthogonal projection of $\Hpi$ onto $V$, then
    \[
    \int_G \| P_V \pi(x) g \|^2 \; d\mu_G (x) = d_{\pi}^{-1} \| g \|^2 \dim V.
    \]
\end{lemma}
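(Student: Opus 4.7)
The plan is to reduce the identity to the orthogonality relations \eqref{eq:ortho} by expanding $\|P_V \pi(x) g\|^2$ in a fixed orthonormal basis of $V$.

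First, I would fix an orthonormal basis $\{e_1, \dots, e_N\}$ of $V$, where $N = \dim V$. Then for each $x \in G$,
\[
P_V \pi(x) g = \sum_{i=1}^N \langle \pi(x) g, e_i \rangle \, e_i,
\]
so by Parseval applied in $V$,
\[
\|P_V \pi(x) g\|^2 = \sum_{i=1}^N |\langle \pi(x) g, e_i \rangle|^2 = \sum_{i=1}^N |\langle e_i, \pi(x) g \rangle|^2 = \sum_{i=1}^N |V_g e_i(x)|^2.
\]
The identity $|V_g e_i(x)| = |\langle \pi(x) g, e_i \rangle|$ just uses conjugate symmetry of the inner product.

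Next, since the integrand is nonnegative, Tonelli lets me exchange sum and integral. Applying the orthogonality relations \eqref{eq:ortho} with $f_1 = f_2 = e_i$ and $g_1 = g_2 = g$ yields
\[
\int_G |\langle e_i, \pi(x) g \rangle|^2 \, d\mu_G(x) = d_\pi^{-1} \|e_i\|^2 \, \|g\|^2 = d_\pi^{-1} \|g\|^2
\]
for each $i$. Summing $N$ copies gives the claimed value $d_\pi^{-1} \|g\|^2 \dim V$.

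There is essentially no genuine obstacle here: the argument is a direct unpacking of the orthogonality relations. The only points that require a moment of care are (i) checking that \eqref{eq:ortho} is applicable to projective (not just ordinary) representations in the form stated in the excerpt, which is exactly the content of \cite[Theorem 2]{aniello2006square} cited before \eqref{eq:ortho}, and (ii) justifying the interchange of $\sum_{i=1}^N$ with $\int_G$, which is immediate since the sum has only $N < \infty$ terms and each term is nonnegative and integrable by the orthogonality relations. No genuine estimates or growth conditions on $g$ are needed — the identity is an exact one, independent of whether $g$ lies in $\mathcal{D}_\pi$.
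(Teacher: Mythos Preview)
Your proof is correct and matches the paper's own argument essentially line for line: fix an orthonormal basis of $V$, expand $\|P_V\pi(x)g\|^2$ via Parseval, and apply the orthogonality relations \eqref{eq:ortho} term by term. The additional remarks you make about Tonelli and the applicability of \eqref{eq:ortho} to projective representations are accurate but not strictly needed, since the sum is finite.
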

\begin{proof}
    Let $\{h_i\}_{i = 1}^N$ be an orthonormal basis for $V$. By the orthogonality relations \eqref{eq:ortho},
\begin{align*}
\int_G \| P_V \pi(y) g \|^2 \; d\mu_G (y)  &= \sum_{i = 1}^N \int_{G}  |\langle h_i, \pi(y) g \rangle |^2 \; d\mu_G (y) \\
&= d_{\pi}^{-1} \| g \|^2 \sum_{i = 1}^N \| h_i \|^2 \\
&= d_{\pi}^{-1} \| g \|^2 \dim V,
\end{align*}
as required. 
\end{proof}

\subsection{Bessel sequences}
A coherent system $\pi(\Lambda) g$ is said to be a \emph{Bessel sequence} with bound $B>0$ if it satisfies
\[
\sum_{\lambda \in \Lambda} |\langle f, \pi(\lambda) g \rangle |^2 \leq B \| f \|^2 \quad \text{for all} \quad f \in \Hpi,
\]
or, equivalently,  $\big\| \sum_{\lambda \in \Lambda} c_{\lambda} \pi(\lambda) g \big\|^2 \leq B \|c \|^2$ for all $c \in \ell^2 (\Lambda)$.

The following lemma provides a quantitative version of the well-known fact that a coherent system $\pi(\Lambda) g$ is a Bessel sequence precisely when $\Lambda$ is relatively separated.

\begin{lemma} \label{lem:bessel}
Let $Q$ be a relatively compact symmetric unit neighborhood. If $\pi(\Lambda)g$ is a Bessel sequence for some $g \in \Hpi \setminus \{0\}$ with bound $B>0$, then
\[
\Rel_Q (\Lambda) = \sup_{x \in G} \# (\Lambda \cap x Q) \leq C B \| g \|^{-2}
\]
for some constant $C = C(g,Q) > 0$. 

The constant $C(g, Q)$ is invariant under any (nonzero) scaling of $g$, in the sense that $C(g, Q) = C(c g, Q)$ for $c \in \mathbb{C}\setminus\{0\}$. 
\end{lemma}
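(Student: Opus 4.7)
The plan is to reduce the Bessel condition to a pointwise estimate on the matrix coefficient $V_g g$, applied at translates of a small, carefully chosen neighborhood of the identity. First, since $V_g g$ is continuous on $G$ and satisfies $V_g g(e) = \|g\|^2 \neq 0$, we can select a relatively compact open symmetric unit neighborhood $Q_0 = Q_0(g) \subseteq G$ (depending only on $g$) on which $|V_g g(z)|^2 \geq \tfrac{1}{2}\|g\|^4$.

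Next, for an arbitrary $y \in G$, we apply the Bessel condition to the test vector $f = \pi(y)g$. Using unitarity of $\pi(y)$ together with the cocycle identity $\pi(y)\pi(y^{-1}\lambda) = \sigma(y, y^{-1}\lambda) \pi(\lambda)$ and $|\sigma| \equiv 1$, one verifies
\[
|\langle \pi(y)g, \pi(\lambda)g \rangle| = |\langle g, \pi(y^{-1}\lambda) g \rangle| = |V_g g(y^{-1}\lambda)|,
\]
so the Bessel inequality becomes $\sum_{\lambda \in \Lambda} |V_g g(y^{-1}\lambda)|^2 \leq B\|g\|^2$. Keeping only the terms $\lambda \in \Lambda \cap y Q_0$, each is bounded below by $\tfrac{1}{2}\|g\|^4$ by Step 1, so
\[
\#(\Lambda \cap y Q_0) \cdot \tfrac{1}{2}\|g\|^4 \leq B\|g\|^2, \qquad \text{hence} \qquad \#(\Lambda \cap y Q_0) \leq 2B \|g\|^{-2}.
\]

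Now the covering step: since $Q$ is relatively compact and $Q_0$ is open, there exist $z_1, \dots, z_N \in G$ with $Q \subseteq \bigcup_{i=1}^N z_i Q_0$, where $N = N(Q, Q_0)$ depends only on $Q$ and $Q_0$. By left translation, $xQ \subseteq \bigcup_{i=1}^N (x z_i) Q_0$ for every $x \in G$, and applying the previous estimate with $y = x z_i$ yields
\[
\#(\Lambda \cap x Q) \leq \sum_{i=1}^N \#(\Lambda \cap x z_i Q_0) \leq 2 N B \|g\|^{-2},
\]
so we may take $C(g, Q) := 2 N(Q, Q_0(g))$.

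Finally, for the scaling invariance, note that under $g \mapsto c g$ with $c \in \mathbb{C}\setminus\{0\}$ both $\|cg\|^4$ and $|V_{cg}(cg)|^2 = |c|^4 |V_g g|^2$ pick up the factor $|c|^4$, so the defining inequality for $Q_0$ in Step 1 is unchanged; hence $Q_0(cg) = Q_0(g)$ and $C(cg, Q) = C(g, Q)$. The argument is essentially a localization-plus-covering reduction, and the only point that deserves attention is the projective matrix coefficient identity in Step 2 — here the absence of strong continuity is irrelevant because only $|V_g g|$ (which is always continuous) enters the estimate.
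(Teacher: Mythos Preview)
Your proof is correct and follows essentially the same approach as the paper: pick a small neighborhood on which $|V_g g|$ is bounded below by a constant multiple of $\|g\|^2$, apply the Bessel inequality to $\pi(y)g$ to bound $\#(\Lambda\cap yQ_0)$, then cover $Q$ by finitely many translates of $Q_0$. The only cosmetic differences are that the paper takes its small neighborhood $U$ inside $Q$ and uses the threshold $|V_g g|>\tfrac12\|g\|^2$ (yielding $C=4n$) rather than your $|V_g g|^2\ge\tfrac12\|g\|^4$ (yielding $C=2N$); also, in Step~1 you should say ``$|V_g g|$ is continuous'' rather than ``$V_g g$ is continuous'' (only the absolute value is guaranteed continuous for projective representations), though you correctly note this at the end and the argument is unaffected.
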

\begin{proof}
Since $|V_g g|$ is continuous and $|V_g g|(e) = \| g \|^2 > 0$, there exists an open unit neighborhood $U \subseteq Q$  such that $|V_g g| (x) > \frac{1}{2}\|g\|^2$ for all $x \in U$. Therefore, for $x \in G$,
    \[
     \frac{1}{4}\|g\|^4\, \# (\Lambda \cap x U) \leq \sum_{\lambda \in \Lambda \cap x U} |V_g g (x^{-1} \lambda)|^2 \leq B \| \pi(x) g\|^2 = B \| g \|^2,
    \]
    or 
    \[
    \# (\Lambda \cap x U)\leq 4B \| g \|^{-2}.
    \]
By relative compactness of $Q$, there exist finitely many points $x_1, ..., x_n \in G$ such that $Q \subseteq \bigcup_{i = 1}^n x_i U$. Combining this with the above estimate yields 
\[
\# (\Lambda \cap xQ) \leq \sum_{i = 1}^n \# (\Lambda \cap x x_i U) \leq 4n  B \| g \|^{-2},
\]
which proves the claim for $C(g,Q)=4n$. 
\end{proof}

\subsection{Coherent frames}
Recall that a coherent system $\pi(\Lambda) g$ is said to be a \emph{frame} for $\Hpi$ if there exist $A, B >0$, called \emph{frame bounds}, such that
\[
A \| f \|^2 \leq \sum_{\lambda \in \Lambda} |\langle f, \pi (\lambda) g \rangle |^2 \leq B \| f \|^2
\]
for all $f \in \Hpi$. In this case, the frame operator $S := \sum_{\lambda \in \Lambda} \langle \cdot , \pi(\lambda) g \rangle \pi(\lambda) g$ is invertible on $\Hpi$, and the system $\{ S^{-1} \pi (\lambda) g \}_{\lambda \in \Lambda}$ forms a frame with bounds $1/B$ and $1/A$, called the \emph{canonical dual frame} of $\pi(\Lambda) g$.

The following theorem corresponds to \Cref{thm:uniform_intro}. 

\begin{theorem}\label{thm: lower bound frame}
 Let $(K_n)_{n \in \mathbb{N}}$ be a strong F\o lner exhaustion sequence in $G$ and let $Q$ be a relatively compact symmetric unit neighborhood.
 
If there exist $g \in \mathcal{D}_{\pi}$ and $\Lambda \subseteq G$ such that $\pi(\Lambda) g$ is a frame for $\Hpi$, then
\begin{align} \label{eq:counting_frame}
\inf_{x \in G } \# ( \Lambda \cap x K_n )  \geq d_{\pi} \big( \mu_G (K_n ) - C I_n \big), 
\end{align}
where $C>0$ is a constant depending on $g$, $Q$ and the frame bounds, and 
\[
I_n := \int_{K_n }  \int_{K_n^c Q} |M_Q V_g g(y^{-1} z) |^2 \; d\mu_G (z)  d\mu_G (y) = o\big(\mu_G (K_n)\big).
\]
\end{theorem}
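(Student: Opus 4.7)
The plan is to bound $\#(\Lambda \cap xK_n)$ from below by evaluating, in two different ways, the integral of $\|P_{V}\pi(y)g\|^2$ for a suitably chosen finite-dimensional subspace $V = V(n,x) \subseteq \Hpi$. Concretely, let $S$ be the frame operator of $\pi(\Lambda)g$ and let $\tilde g_\lambda := S^{-1}\pi(\lambda)g$ be the canonical dual frame. Define
\[
\widetilde V_{n,x} \;:=\; \spann\{ \tilde g_\lambda : \lambda \in \Lambda \cap xK_n\},
\]
which is finite dimensional of dimension at most $\#(\Lambda \cap xK_n)$. Applying \Cref{lem:dimension} to $\widetilde V_{n,x}$ gives
\[
\int_G \bigl\| P_{\widetilde V_{n,x}}\pi(y)g\bigr\|^2 \, d\mu_G(y) \;=\; d_\pi^{-1}\|g\|^2\, \dim \widetilde V_{n,x} \;\leq\; d_\pi^{-1}\|g\|^2\, \#(\Lambda \cap xK_n).
\]
This is the upper bound on $\#(\Lambda \cap xK_n)$ from above in our chain.

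For the matching lower bound on the same integral restricted to $xK_n$, I would exploit the frame reconstruction $\pi(y)g = \sum_\lambda \langle \pi(y)g, \pi(\lambda)g\rangle\, \tilde g_\lambda$ and split the sum into $h_1(y) \in \widetilde V_{n,x}$ (indices in $\Lambda \cap xK_n$) and a tail $h_2(y)$ (indices outside). The best-approximation property of orthogonal projection immediately yields $\|P_{\widetilde V_{n,x}}\pi(y)g\|^2 \geq \|g\|^2 - \|h_2(y)\|^2$, and the Bessel bound $1/A$ for the dual frame gives
\[
\|h_2(y)\|^2 \;\leq\; \frac{1}{A} \sum_{\lambda \in \Lambda \cap (xK_n)^c} |V_gg(y^{-1}\lambda)|^2.
\]
Integrating this over $y \in xK_n$ and applying \Cref{lem:amalgam} with $K = xK_n$ and the continuous function $z \mapsto V_gg(y^{-1}z)$ yields, after interchanging the order of integration, a double integral involving $|M_Q V_g g(y^{-1}z)|^2$ over $y \in xK_n$ and $z \in (xK_n)^c Q$. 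Left-invariance of $\mu_G$ together with the substitution $y = xy'$, $z = xz'$ collapses this to exactly $I_n$. Feeding this into the two-sided comparison and invoking \Cref{lem:bessel} to bound $\Rel_Q(\Lambda) \lesssim_{g,Q} B\|g\|^{-2}$ produces \Cref{eq:counting_frame} with a constant $C$ of the stated form.

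It remains to prove $I_n = o(\mu_G(K_n))$, which I expect to be the main technical obstacle because the inner integration domain $K_n^c Q$ depends on $n$ and a naive application of a strong F\o lner property is not available uniformly in $z$. The plan is to set $F := |M_Q V_gg|^2$, which lies in $L^1(G)$ by the assumption $g \in \mathcal{D}_\pi$, and rewrite via Fubini after the substitution $u = y^{-1}z$:
\[
I_n \;=\; \int_G F(u)\, \mu_G\bigl(K_n \cap K_n^c Q u^{-1}\bigr)\, d\mu_G(u).
\]
Given $\veps > 0$, choose a compact $K \subseteq G$ with $\int_{G\setminus K}F < \veps$; the contribution from $u \notin K$ is then at most $\veps\,\mu_G(K_n)$ since the inner measure is trivially bounded by $\mu_G(K_n)$. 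For $u \in K$, a short set-theoretic manipulation shows $K_n \cap K_n^c Q u^{-1} \subseteq K_n \widetilde K \cap K_n^c \widetilde K$ for the fixed symmetric compact set $\widetilde K := (KQ \cup (KQ)^{-1} \cup \{e\})$, so the strong F\o lner condition \Cref{eq:strong_folner} yields $\mu_G(K_n \cap K_n^c Q u^{-1}) \leq o(\mu_G(K_n))$ uniformly in $u \in K$; hence the contribution from $u \in K$ is at most $\|F\|_1 \cdot o(\mu_G(K_n))$, which is $\leq \veps\,\mu_G(K_n)$ for $n$ large. Since $\veps$ was arbitrary, $I_n = o(\mu_G(K_n))$ follows.
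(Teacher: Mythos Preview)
Your argument is correct and, for the counting inequality itself, essentially identical to the paper's: same subspace $V=\spann\{S^{-1}\pi(\lambda)g:\lambda\in\Lambda\cap xK_n\}$, same use of \Cref{lem:dimension} for the upper bound, same best-approximation/dual-frame Bessel estimate for $\|P_{V^\perp}\pi(y)g\|^2$, and the same appeal to \Cref{lem:amalgam} and \Cref{lem:bessel} to land on $I_n$.

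Where you differ is in proving $I_n=o(\mu_G(K_n))$. The paper splits the $(y,z)$-domain into three pieces $A_n\times(K_nQ)^c$, $A_n\times(K_n^cQ\cap K_nQ)$, and $A_n^c\times K_n^cQ$ (with $A_n=K_n\setminus K_n^cK$), uses the exhaustion property to ensure $K\subseteq K_n$, and bounds each piece separately. Your Fubini rewriting
\[
I_n=\int_G F(u)\,\mu_G\bigl(K_n\cap K_n^cQu^{-1}\bigr)\,d\mu_G(u)
\]
followed by the near/far split in $u$ is cleaner: it isolates the ``boundary mass'' $\mu_G(K_n\cap K_n^cQu^{-1})$ directly, bounds it uniformly for $u$ in a compact set by a single application of the strong F\o lner condition, and never uses that $(K_n)$ is an exhaustion. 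The paper's route has the minor advantage of being completely explicit about which F\o lner ratios enter (useful for the quantitative refinements in \Cref{sec:PG}), but for the qualitative statement here your argument is the shorter one. One cosmetic point: take the closure of your $\widetilde K$ before invoking \eqref{eq:strong_folner}, since the F\o lner condition is stated for compact sets.
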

 \begin{proof}
Let $A, B > 0$ be frame bounds for $\pi(\Lambda) g$, and let $\{h_{\lambda}\}_{\lambda \in \Lambda}$ be its canonical dual frame. Fix $x \in G$ and $n \in \mathbb{N}$, and define 
\[
V_{x, n} := \spann \{ h_{\lambda} : \lambda \in \Lambda \cap x K_n \}. 
\]
Denote by $P_V$ and $P_{V^{\perp}}$ the orthogonal projections from $\Hpi$ onto $V_{x,n}$ and its orthogonal complement, respectively. The proof will be split into four steps.
\\~\\
\textbf{Step 1.} In this step, we will estimate $\int_{x K_n } \| P_V \pi (y) g \|^2 \; d\mu_G(y)$. For this, note that
\[
\dim V_{x,n} \leq \# (\Lambda \cap x K_n ),
\]
and hence
\[
\int_{x K_n } \| P_V \pi(y) g \|^2 \; d\mu_G (y) \leq d_{\pi}^{-1} \| g \|^2\dim V_{x,n} \leq d_{\pi}^{-1} \| g \|^2 \# (\Lambda \cap x K_n )
\]
by Lemma \ref{lem:dimension}.
\\~\\ 
\textbf{Step 2.} In this step, we estimate $\int_{x K_n } \| P_{V^{\perp}} \pi (y) g \|^2 \; d\mu_G(y)$. 
Using that $\{h_{\lambda}\}_{\lambda \in \Lambda}$ is a frame with upper bound $A^{-1}$ and $f = \sum_{\lambda \in \Lambda} \langle f, \pi(\lambda) g \rangle h_{\lambda}$ for $f \in \Hpi$, a direct calculation gives 
\begin{align*}
    \| P_{V^{\perp}} \pi(y) g \|^2 
    &\leq \bigg\| \pi(y) g - \sum_{\lambda \in \Lambda \cap x K_n} \langle \pi(y)g , \pi(\lambda) g \rangle h_{\lambda} \bigg\|^2 \\
    &= \bigg\| \sum_{\lambda \in \Lambda \cap xK_n^c} \langle \pi(y) g , \pi(\lambda) g \rangle h_{\lambda} \bigg\|^2 \\
    &\leq \frac{1}{A} \sum_{\lambda \in \Lambda \cap xK_n^c} |\langle \pi(y) g, \pi(\lambda) g \rangle|^2 \\
    &= \frac{1}{A} \sum_{\lambda \in \Lambda \cap x K_n^c} | V_g \big(\pi(y) g\big) (\lambda)|^2.
\end{align*}
Since $|V_g (\pi(y) g) | = |V_g g (y^{-1} \cdot)|$, an application of Lemma \ref{lem:amalgam} next yields that 
\begin{align*}
\| P_{V^{\perp}} \pi(y) g \|^2 
&\leq A^{-1} \frac{\Rel_Q (\Lambda)}{\mu_G (Q)} \int_{xK_n^c Q} |M_Q V_g \big(\pi(y) g \big) (z) |^2 \; d\mu_G (z) \\
&= A^{-1} \frac{\Rel_Q (\Lambda)}{\mu_G (Q)} \int_{xK_n^c Q} |M_Q V_g g(y^{-1} z) |^2 \; d\mu_G (z),
\end{align*}
where the equality also used that $M_Q [V_g g(y^{-1} \cdot)] = (M_Q V_g g)(y^{-1} \cdot )$ Therefore,
\begin{align*}
&\int_{x K_n } \| P_{V^{\perp}} \pi (y) g \|^2 \; d\mu_G(y) \\ &\quad \quad \quad \leq A^{-1} \frac{\Rel_Q (\Lambda)}{\mu_G (Q)} \int_{xK_n }  \int_{xK_n^c Q} |M_Q V_g g (y^{-1} z) |^2 \; d\mu_G (z)  d\mu_G (y) \\ &\quad \quad \quad \leq \frac{B}{A\| g \|^2} \frac{C(g, Q)}{\mu_G (Q)} \int_{K_n }  \int_{K_n^c Q} |M_Q V_g g (y^{-1} z) |^2 \; d\mu_G (z)  d\mu_G (y),
\end{align*}
where the last inequality follows from a change of variable and \Cref{lem:bessel}.
\\~\\
\textbf{Step 3.} Observe that $\| P_V \pi (y) g \|^2 + \| P_{V^{\perp}} \pi(y) g \|^2 = \| g \|^2$ for every $y \in G$. Therefore, integrating over $xK_n $ gives
\[
\mu_G (K_n) \| g \|^2 = \int_{xK_n } \| P_V \pi(y) g \|^2 \; d\mu_G (y) + \int_{xK_n } \| P_{V^{\perp}} \pi(y) g \|^2 \; d\mu_G (y).
\]
Combining this with the estimates from Step 1 and Step 2 yields
\begin{align*}
    \# (\Lambda \cap xK_n ) \geq d_{\pi}  \bigg( \mu_G (K_n ) - \frac{B}{A\| g \|^4} \frac{C(g, Q)}{\mu_G (Q)} \int_{K_n }  \int_{K_n^c Q} |M_Q V_g g(y^{-1} z) |^2 \; d\mu_G (z)  d\mu_G (y) \bigg),
\end{align*}
which shows \eqref{eq:counting_frame}.
\\~\\ 
\textbf{Step 4.} It remains to show the claim $I_n = o(\mu_G (K_n))$. For this, let $\varepsilon > 0$ be arbitrary, and choose a compact symmetric unit neighborhood $K$ such that 
\[ \int_{G \setminus K} |M_Q V_g g(x)|^2 \; d\mu_G (x) < \varepsilon.  \]
Choose $n_0\in \mathbb{N}$ such that $K$ is  contained in $K_n$ for all $n\geq n_0$, which is possible since $G = \bigcup_{n \in \mathbb{N}} (K_n)^{\circ}$ and $K_n \subseteq (K_{n+1})^{\circ}$ for $n \in \mathbb{N}$. Define $A_n := K_n \setminus K_n^c K$ and let $A_n^c$ denote the complement of $A_n$ in $K_n$. Defining
\[
I_n^{(1)} := \int_{A_n} \int_{(K_n Q)^c} |M_Q V_g g(y^{-1} z) |^2 \; d\mu_G (z)  d\mu_G (y), \]
\[I_n^{(2)} := \int_{A_n} \int_{K_n^c Q \setminus (K_n Q)^c} |M_Q V_g g(y^{-1} z) |^2 \; d\mu_G (z)  d\mu_G (y),
\]
and 
\[
I_n^{(3)} := \int_{A_n^c} \int_{K_n^c Q} |M_Q V_g g (y^{-1} z) |^2 \; d\mu_G (z)  d\mu_G (y),
\]
gives
\begin{align*}
I_n &:= \int_{K_n }  \int_{K_n^c Q} |M_Q V_g g (y^{-1} z) |^2 \; d\mu_G (z)  d\mu_G (y) 
= I_n^{(1)} +  I_n^{(2)} + I_n^{(3)}. 
\end{align*}
To estimate $I_n^{(1)}$, we claim that if $y \in A_n = K_n \setminus K_n^c K$ and $z \in  (K_nQ)^c$, then $y^{-1} z \notin K$. For this, note that if $y \in K_n$, but $y \notin K_n^c K$, then $y k^{-1} \notin K_n^c$ for all $k \in K$, and thus $y K \subseteq K_n$ by symmetry of $K$. Therefore, if one would have that $y^{-1} z \in K$, then $z \in y K \subseteq K_n \subseteq K_n Q$, which contradicts that $z \in (K_n Q)^c$. Thus, $y^{-1} z \notin K$ whenever $y \in A_n$ and $z \in (K_nQ)^c$, which implies that
\[
I_n^{(1)} \leq \int_{A_n} \int_{G \setminus K} |M_Q V_g g(x)|^2 \; d\mu_G (x) d\mu_G (y) \leq \varepsilon \cdot \mu_G (K_n \setminus K_n^c K).
\] 
For estimating $I_n^{(2)}$, note that $K_n^c Q \setminus (K_n Q)^c = K_n^c Q \cap K_n Q$. Therefore, an application of Tonelli's theorem yields
\[
I_n^{(2)} \leq \int_{G} \int_{K_n^c Q \cap K_n Q} |M_Q V_g g(y^{-1} z)|^2 \; d\mu_G (z) d\mu_G (y) \leq \| M_Q V_g g \|_{L^2}^2 \cdot \mu_G (K_n^c Q \cap K_n Q).
\]
Lastly, a similar argument gives
\[
I_n^{(3)} \leq \int_{A^c_n} \int_G |M_Q V_g g(y^{-1} z)|^2 \; d\mu_G (z) d\mu_G (y) \leq \| M_Q V_g g \|_{L^2}^2 \cdot \mu_G (K_n^c K \cap K_nK).
\]
By \Cref{eq:strong_folner}, there is $n_1\in \mathbb{N}$ such that for all $n\geq n_1$, 
$$\frac{\mu_G (K_n^c Q \cap K_n Q)}{\mu_G (K_n)}
\leq \| M_Q V_g g \|_{L^2}^{-2}\cdot\varepsilon \quad \text{and} \quad 
\frac{\mu_G (K_n^c K \cap K_n K)}{\mu_G (K_n)}
\leq \| M_Q V_g g \|_{L^2}^{-2}\cdot\varepsilon $$
Then, for all $n\geq \max\{n_0,n_1\}$, 
\begin{align*}
\mu_G (K_n)^{-1} I_n &\leq \varepsilon + \| M_Q V_g g \|_{L^2}^2 \cdot \frac{\mu_G (K_n^c Q \cap K_n Q)}{\mu_G (K_n)} + \| M_Q V_g g \|_{L^2}^2 \cdot \frac{\mu_G (K_n^c K \cap K_n K)}{\mu_G (K_n)} \\
&\leq 3\varepsilon,
\end{align*}
which proves that $\mu_G (K_n)^{-1} I_n \to 0$ as $n \to \infty$. This completes the proof.
 \end{proof}

The proof of \Cref{thm: lower bound frame} presented above follows the same overall approach as used for Gabor frames for $L^2 (\mathbb{R})$ in \cite[Theorem 4]{ahn2018density}. Similar arguments as in Step 2 can also be found in (the proof of) \cite[Proposition 2]{grochenig2008homogeneous}.

\subsection{Riesz sequences}
A coherent system $\pi(\Lambda) g$ is called a \emph{Riesz sequence} in $\Hpi$ if there exist $A, B > 0$, called \emph{Riesz bounds}, satisfying
\[
A \| c \|^2 \leq \bigg\| \sum_{\lambda \in \Lambda} c_{\lambda} \pi(\lambda) g \bigg\|^2 \leq B \| c \|^2
\]
for all $c \in \ell^2 (\Lambda)$. A Riesz sequence that is also a frame is called a \emph{Riesz basis}.

The following theorem provides a dual statement of \Cref{thm: lower bound frame} for Riesz sequences. Its proof is based on similar ideas as those of \Cref{thm: lower bound frame}; see also \cite[Theorem 4]{ahn2018density}.

\begin{theorem} \label{thm: lower bound riesz}
 Let $(K_n)_{n \in \mathbb{N}}$ be a strong F\o lner exhaustion sequence in $G$ and let $Q \subseteq G$ be a relatively compact symmetric unit neighborhood. 
 
If there exist $g \in \mathcal{D}_{\pi}$ and $\Lambda \subseteq G$ such that $\pi(\Lambda) g$ is a Riesz sequence in $\Hpi$, then
\begin{align} \label{eq:counting_riesz}
\sup_{x \in G } \# ( \Lambda \cap x K_n )  \leq d_{\pi} \big( \mu_G (K_n ) + C J_n \big), 
\end{align}
for some constant $C>0$ depending on $g$, $Q$ and the Riesz bounds, and 
\[
J_n := \int_{K_n^c} \int_{K_n Q} |M_Q V_g (y^{-1} z) |^2 \; d\mu_G (z)  d\mu_G (y) = o(\mu_G (K_n)).
\]
\end{theorem}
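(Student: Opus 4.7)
The plan is to mirror the proof of \Cref{thm: lower bound frame}, with the roles of $K_n$ and $K_n^c$ interchanged and with the lower Riesz bound $A$ playing the role previously played by the canonical dual frame. Fix $x \in G$ and $n \in \mathbb{N}$, and set $V_{x,n} := \spann \{ \pi(\lambda) g : \lambda \in \Lambda \cap xK_n \}$. Since $\pi(\Lambda) g$ is a Riesz sequence, its elements are linearly independent, so $\dim V_{x,n} = \#(\Lambda \cap xK_n)$. Denoting by $P_V$ the orthogonal projection of $\Hpi$ onto $V_{x,n}$, \Cref{lem:dimension} gives the \emph{equality}
\[
d_{\pi}^{-1} \| g \|^2 \, \#(\Lambda \cap xK_n) = \int_{xK_n} \| P_V \pi(y) g \|^2 \, d\mu_G(y) + \int_{(xK_n)^c} \| P_V \pi(y) g \|^2 \, d\mu_G(y).
\]
The integral over $xK_n$ is bounded by $\|g\|^2 \mu_G(K_n)$ using $\| P_V \pi(y) g \| \leq \| \pi(y) g \| = \|g\|$, which will contribute the main term $d_{\pi} \mu_G(K_n)$ in \eqref{eq:counting_riesz}. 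The task is then to control the tail integral by $C J_n$.

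For this, I would exploit that the lower Riesz bound $A$ provides a dual description of $\| P_V \pi(y) g \|$: any unit vector $f \in V_{x,n}$ admits a unique expansion $f = \sum_{\lambda \in \Lambda \cap xK_n} c_\lambda \pi(\lambda) g$ with $\| c \|_{\ell^2} \leq A^{-1/2}$, and hence Cauchy--Schwarz combined with $\| P_V \pi(y) g \| = \sup_{f \in V_{x,n}, \| f \| = 1} |\langle \pi(y) g, f \rangle|$ yields
\[
\| P_V \pi(y) g \|^2 \leq A^{-1} \sum_{\lambda \in \Lambda \cap xK_n} | V_g g (y^{-1} \lambda) |^2.
\]
Applying \Cref{lem:amalgam} with $K = xK_n$, using $M_Q [V_g g(y^{-1} \cdot)] = (M_Q V_g g)(y^{-1} \cdot)$, a left translation by $x^{-1}$, and the bound $\Rel_Q(\Lambda) \leq C(g,Q) B \|g\|^{-2}$ from \Cref{lem:bessel} then gives
\[
\int_{(xK_n)^c} \| P_V \pi(y) g \|^2 \, d\mu_G(y) \leq \frac{B \, C(g,Q)}{A \, \| g \|^2 \, \mu_G(Q)} \, J_n.
\]
Combining these two estimates and multiplying by $d_\pi \|g\|^{-2}$ proves \eqref{eq:counting_riesz}.

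It remains to show $J_n = o(\mu_G(K_n))$. Applying Tonelli's theorem and the measure-preserving substitution $(y, z) \mapsto (y, y^{-1} z)$ on $G \times G$, one rewrites
\[
J_n = \int_G |M_Q V_g g (u)|^2 \, \mu_G\bigl( \{ y \in K_n^c : y u \in K_n Q \} \bigr) \, d\mu_G(u).
\]
Given $\veps > 0$, choose a compact symmetric unit neighborhood $K$ such that $\int_{G \setminus K} |M_Q V_g g|^2 \, d\mu_G < \veps$. For $u \in K$, the inner measure is bounded by $\mu_G( K_n Q K \cap K_n^c) \leq \mu_G(K_n(QK) \triangle K_n)$, which is $o(\mu_G(K_n))$ uniformly in $u$ by the F\o lner property applied to the compact set $QK$. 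For $u \notin K$, use the crude bound $\mu_G(\{ y : yu \in K_n Q \}) = \mu_G(K_n Q) \lesssim \mu_G(K_n)$. Splitting the $u$-integral accordingly and letting $\veps \downarrow 0$ proves $J_n / \mu_G(K_n) \to 0$.

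The main obstacle compared to \Cref{thm: lower bound frame} is that the outer domain $K_n^c$ in $J_n$ has infinite Haar measure, so the decomposition used in Step 4 of that proof (partitioning the bounded set $K_n$ into an interior and a boundary layer) is not directly available here. The change of variables $(y, z) \mapsto (y, y^{-1}z)$ circumvents this by transferring the tail estimate into one concerning translates of $K_n$, where the F\o lner property applies.
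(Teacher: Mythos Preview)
Your argument for the counting estimate \eqref{eq:counting_riesz} is essentially identical to the paper's: the same subspace $V_{x,n}$, the same use of \Cref{lem:dimension}, the same main-term bound $\|g\|^2\mu_G(K_n)$, and the same control of the tail via \Cref{lem:amalgam} and \Cref{lem:bessel}. The only cosmetic difference is that you derive $\|P_V\pi(y)g\|^2 \leq A^{-1}\sum_{\lambda\in\Lambda\cap xK_n}|V_gg(y^{-1}\lambda)|^2$ directly from the lower Riesz bound via Cauchy--Schwarz, whereas the paper phrases it through the fact that a finite Riesz basis is a frame for its span with the same lower bound; these are equivalent.

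Your verification of $J_n = o(\mu_G(K_n))$, however, takes a genuinely different route. The paper partitions the \emph{inner} (bounded) domain $K_nQ$ into $A_n = K_n\setminus K_n^cK$, its complement in $K_n$, and the rim $K_nQ\setminus K_n$, then estimates the three resulting pieces separately---so your closing remark that ``partitioning the bounded set $K_n$ \dots\ is not directly available here'' misidentifies the obstacle, since exactly that partition is what the paper uses. Your change of variables $(y,z)\mapsto(y,y^{-1}z)$ instead recasts $J_n$ as $\int_G |M_QV_gg(u)|^2\,\mu_G(K_n^c\cap K_nQu^{-1})\,d\mu_G(u)$ and splits according to $u\in K$ or $u\notin K$. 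This is correct and arguably more streamlined: it avoids the three-term decomposition and makes the F{\o}lner cancellation transparent in one line. One small point to tighten: $QK$ is only relatively compact, so apply the F{\o}lner condition to its compact closure $\overline{QK}$ (which changes nothing since $K_n(QK)\setminus K_n\subseteq K_n\overline{QK}\setminus K_n$).
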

\begin{proof}
Let $A, B>0$ be Riesz bounds for $\pi(\Lambda) g$. Fix $x \in G$ and $n \in \mathbb{N}$, and let  
\[
V_{x, n} := \spann \{ \pi(\lambda)g : \lambda \in \Lambda \cap x K_n \}. 
\]
Then $\pi(\Lambda \cap xK_n)g$ is a Riesz basis for $V_{x,n}$ with lower Riesz bound $A$. This implies, on the one hand, that
\[
\dim V_{x,n} = \# (\Lambda \cap x K_n ),
\]
and, on the other hand, that $\pi(\Lambda \cap xK_n)g$ is also a frame. As a lower frame bound for $\pi(\Lambda \cap xK_n)g$ one can choose the lower Riesz bound $A$, see, e.g., \cite[Theorem 5.4.1]{christensen2016introduction}.

Denote by $P_V$ the orthogonal projection from $\Hpi$ onto $V_{x,n}$. 
\\~\\
\textbf{Step 1.} In this step, we will estimate $\int_{x K_n } \| P_V \pi (y) g \|^2 \; d\mu_G(y)$. For this, 
simply note that
$$\| P_V \pi (y) g \|^2 + \| P_{V^{\perp}} \pi(y) g \|^2 = \| g \|^2,$$
so that integrating over $xK_n$ yields
\[
\int_{x K_n } \| P_V \pi(y) g \|^2 \; d\mu_G (y) \leq \| g \|^2 \mu_G(K_n),
\]
which is the desired estimate.
\\~\\
\textbf{Step 2.} In this step, we estimate $\int_{x K_n^{c} } \| P_{V} \pi (y) g \|^2 \; d\mu_G(y)$. 
Using that $\pi(\Lambda \cap xK_n)g$ is a frame for $V_{x,n}$ with lower bound $A$, a direct calculation gives
\begin{align*}
    \| P_{V} \pi(y) g \|^2
    &\leq \frac{1}{A}  \sum_{\lambda \in \Lambda \cap xK_n} \left|\langle P_V \pi(y) g , \pi(\lambda) g \rangle \right|^2  \\
    &= \frac{1}{A}  \sum_{\lambda \in \Lambda \cap xK_n} \left|\langle \pi(y) g , \pi(\lambda) g \rangle \right|^2 \\
    &= \frac{1}{A} \sum_{\lambda \in \Lambda \cap x K_n} | V_g \big(\pi(y) g\big) (\lambda)|^2,
\end{align*}
Using that $|V_g (\pi(y) g)| = |V_g g(y^{-1} \cdot )|$, an application of Lemma \ref{lem:amalgam} next yields that 
\begin{align*}
\| P_{V} \pi(y) g \|^2 
&\leq \frac{1}{A} \frac{\Rel_Q (\Lambda)}{\mu_G (Q)} \int_{xK_n Q} |M_Q V_g g \big(\pi(y) g \big) (z) |^2 \; d\mu_G (z) \\
&=  \frac{1}{A} \frac{\Rel_Q (\Lambda)}{\mu_G (Q)} \int_{xK_n Q} |M_Q V_g g(y^{-1} z) |^2 \; d\mu_G (z).
\end{align*}
Therefore, by \Cref{lem:bessel},
\begin{align*}
&\int_{x K_n^c } \| P_{V} \pi (y) g \|^2 \; d\mu_G(y) \\ &\quad \quad \quad \leq \frac{1}{A} \frac{\Rel_Q (\Lambda)}{\mu_G (Q)} \int_{xK_n^c }  \int_{xK_n Q} |M_Q V_g g (y^{-1} z) |^2 \; d\mu_G (z)  d\mu_G (y) \\
&\quad \quad \quad \leq \frac{B}{A\| g \|^2} \frac{C(g, Q)}{\mu_G (Q)} \int_{K_n^c }  \int_{K_n Q} |M_Q V_g g (y^{-1} z) |^2 \; d\mu_G (z)  d\mu_G (y),
\end{align*}
which is the desired estimate.
\\~\\
\textbf{Step 3.} 
Observe first that 
\begin{align*}
    \int_{G}\| P_V \pi (y) g \|^2\; d\mu_G (y)&= \int_{xK_n}\| P_V \pi (y) g \|^2\; d\mu_G (y)+\int_{xK_n^c}\| P_V \pi (y) g \|^2\; d\mu_G (y).
\end{align*}
The left-hand side is equal to $d_{\pi}^{-1} \| g\|^2 \dim V_{x,n}=d_{\pi}^{-1} \| g \|^2 \# (\Lambda \cap x K_n )$ by  Lemma \ref{lem:dimension} and the observations preceding Step 1, while the right-hand side can be estimated above by Step 1 and Step 2. Altogether, this yields
\begin{align*}
    \# (\Lambda \cap xK_n ) \leq d_{\pi}  \bigg( \mu_G (K_n ) + \frac{B}{A\| g \|^4} \frac{C(g, Q)}{\mu_G (Q)} \int_{K_n^c }  \int_{K_n Q} |M_Q V_g g(y^{-1} z) |^2 \; d\mu_G (z)  d\mu_G (y) \bigg),
\end{align*}
which shows \eqref{eq:counting_riesz}.
\\~\\
\textbf{Step 4.} The final claim $J_n = o(\mu_G (K_n))$ follows by estimates very close to those in Step 4 of \Cref{thm: lower bound frame}, and will as such only be sketched. Let $\varepsilon > 0$ and choose a compact symmetric unit neighborhood $K$ such that  $\int_{G \setminus K} |M_Q V_g g(x^{-1})|^2 \; d\mu_G (x) < \varepsilon$ and let $n_0 \in \mathbb{N}$ be such that $K \subseteq K_n$ for all $n \geq n_0$. Write 
    \begin{align*}
J_n&= \bigg(\int_{K_n^{c}}\int_{A_n}+\int_{K_n^{c}}\int_{A_n^c}+\int_{K_n^{c}}\int_{K_nQ\setminus K_n} \bigg) |M_Q V_g g(y^{-1} z)|^2 \; d\mu_G (z) d\mu_G (y) \\ &=:J_n^{(1)}+J_n^{(2)}+J_n^{(3)}, 
    \end{align*}
    where $A_n = K_n \setminus K_n^c K$. 
Then $z \in A_n = K_n \setminus K_n^c K$ and $y \in K_n^c$ imply $y^{-1} z \notin K$ and hence $z^{-1} y \notin K$ as $K$ is symmetric, so that
\[
J_n^{(1)} \leq \int_{A_n} \int_{G \setminus K} |(M_Q V_g g)^{\vee} ( y)|^2 \; d\mu_G (y) d\mu_G (z) \leq \varepsilon \cdot \mu_G (K_n \setminus K_n^c K),
\]
where we used the notation $F^{\vee} (x) = F(x^{-1})$ for $x \in G$.
 In addition, 
 \[
 J^{(2)}_n \leq \| M_Q V_g g \|_{L^2}^2 \cdot \mu_G (K_n K \cap K_n^c K) \quad \text{and} \quad J^{(3)}_n \leq \| M_Q V_g g \|_{L^2}^2 \cdot \mu_G (K_n Q \setminus K_n).
 \]
 In combination, this easily shows that $\mu_G (K_n)^{-1} J_n \to 0$ as $n \to \infty$. 
\end{proof}

\subsection{Density conditions} \label{sec:density}
Let $(K_n)_{n \in \mathbb{N}}$ be a strong F\o lner sequence in $G$. The lower and upper Beurling density of a discrete set $\Lambda \subseteq G$ are defined by 
\[
D^- (\Lambda) = \lim_{n \to \infty} \inf_{x \in G} \frac{\# (\Lambda \cap xK_n)}{\mu_G (K_n)} \quad \text{and} \quad D^+ (\Lambda) = \lim_{n \to \infty} \sup_{x \in G} \frac{\# (\Lambda \cap xK_n)}{\mu_G (K_n)},
\]
respectively. Note that $0 \leq D^- (\Lambda) \leq D^+ (\Lambda) \leq \infty$, and $D^+ (\Lambda) < \infty$ whenever $\Lambda$ is relatively separated. For relatively separated sets $\Lambda$, the densities $D^-(\Lambda)$ and $D^+ (\Lambda)$ are independent of the choice of strong F\o lner sequence, cf. \cite[Proposition 5.14]{pogorzelski2022leptin}.

The following theorem is a direct consequence of \Cref{thm: lower bound frame} and \Cref{thm: lower bound riesz}.

\begin{theorem} \label{thm:density}
    Let $(\pi, \Hpi)$ be a projective discrete series representation of a unimodular amenable group $G$ of formal degree $d_{\pi} > 0$. Let $\Lambda \subseteq G$ and $g \in \mathcal{D}_{\pi}$. 
    \begin{enumerate}
        \item[(i)] If $\pi (\Lambda) g$ is a frame for $\Hpi$, then $D^- (\Lambda) \geq d_{\pi}$;
        \item[(ii)] If $\pi(\Lambda) g$ is a Riesz sequence in $\Hpi$, then $D^+ (\Lambda) \leq d_{\pi}$.
    \end{enumerate}
\end{theorem}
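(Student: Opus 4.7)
The plan is to derive both parts as immediate corollaries of the counting function estimates already established in Theorem \ref{thm: lower bound frame} and Theorem \ref{thm: lower bound riesz}, by dividing the pointwise bounds by $\mu_G(K_n)$ and passing to the limit along any strong F\o lner exhaustion sequence.

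For part (i), I would start by fixing an arbitrary strong F\o lner exhaustion sequence $(K_n)_{n\in\mathbb{N}}$ and a relatively compact symmetric unit neighborhood $Q \subseteq G$; such a sequence exists because $G$ is amenable and unimodular. Since $\pi(\Lambda)g$ is a frame, it is in particular a Bessel sequence with some bound $B < \infty$, so Lemma \ref{lem:bessel} guarantees that $\Lambda$ is relatively separated, with $\Rel_Q(\Lambda) < \infty$. This relative separation is the crucial preliminary: it ensures both that $D^+(\Lambda) < \infty$ and, by the cited result \cite[Proposition 5.14]{pogorzelski2022leptin}, that $D^\pm(\Lambda)$ are well-defined genuine limits (not merely a $\liminf$/$\limsup$) and are independent of the chosen strong F\o lner sequence. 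Now Theorem \ref{thm: lower bound frame} gives
\[
\inf_{x \in G} \frac{\#(\Lambda \cap xK_n)}{\mu_G(K_n)} \;\geq\; d_{\pi}\bigg(1 - \frac{C I_n}{\mu_G(K_n)}\bigg),
\]
where $I_n = o(\mu_G(K_n))$. Passing to the limit $n \to \infty$, the right-hand side converges to $d_{\pi}$, which yields $D^-(\Lambda) \geq d_{\pi}$ directly from the definition of $D^-$.

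Part (ii) proceeds in complete analogy. A Riesz sequence $\pi(\Lambda)g$ is by definition Bessel (with upper bound $B$), so Lemma \ref{lem:bessel} again provides relative separation of $\Lambda$, legitimizing the density $D^+(\Lambda)$ as a limit. Applying Theorem \ref{thm: lower bound riesz} and dividing by $\mu_G(K_n)$ gives
\[
\sup_{x \in G} \frac{\#(\Lambda \cap xK_n)}{\mu_G(K_n)} \;\leq\; d_{\pi}\bigg(1 + \frac{C J_n}{\mu_G(K_n)}\bigg),
\]
with $J_n = o(\mu_G(K_n))$, from which $D^+(\Lambda) \leq d_{\pi}$ follows in the limit.

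Since this statement is a direct consequence of the two main counting estimates, there is essentially no new technical obstacle: all the analytic work — the orthogonal decomposition, the maximal function estimate, and the F\o lner-type control of the error terms $I_n$ and $J_n$ — has already been carried out in the proofs of Theorem \ref{thm: lower bound frame} and Theorem \ref{thm: lower bound riesz}. The only point that warrants explicit mention is the appeal to relative separation (via Lemma \ref{lem:bessel}) to ensure the limits in the definitions of $D^\pm(\Lambda)$ actually exist, so that the uniform-in-$x$ inequalities transfer cleanly to density inequalities.
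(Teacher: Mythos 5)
Your proposal is correct and matches the paper's argument: the paper likewise presents \Cref{thm:density} as a direct consequence of \Cref{thm: lower bound frame} and \Cref{thm: lower bound riesz}, obtained by dividing the counting estimates by $\mu_G(K_n)$ and using $I_n, J_n = o(\mu_G(K_n))$. Your extra remark on relative separation (via \Cref{lem:bessel}) guaranteeing that the densities are well-defined and Følner-independent is a sensible clarification but not a departure from the paper's route.
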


 \Cref{thm:density} recovers various density theorems for coherent systems obtained in, e.g., \cite{fuehr2017density, caspers2023overcompleteness, mitkovski2020density, enstad2022coherent, enstad2022dynamical}. The proof presented here appears to be the simplest and most elementary among the current proofs of the density theorems for frames and Riesz sequences. 

\section{Error term estimates on polynomial growth groups} \label{sec:PG}
In this section, we provide more refined estimates for \Cref{thm: lower bound frame} and \Cref{thm: lower bound riesz} under additional assumptions on the F\o lner sequences and under additional localization assumptions on the coherent system. We start by introducing the essential notions.

\subsection{F\o lner conditions for balls} \label{sec:metric}
In the remainder of this paper, we assume that $G$ is generated by a compact symmetric unit neighborhood $U$. A second countable locally compact group $G$ is said to be a \emph{group of polynomial growth} if there exists a constant $C\geq1$ and $D > 0$ such that
\begin{align} \label{eq:polynomialgrowth}
\mu_G (U^n) \leq C n^D \quad \text{for all} \quad n \in \mathbb{N},
\end{align}
where $U^n = U \cdots U$ is the $n$-fold product of $U$. The growth of $G$ is independent of the choice of $U$, in the sense that another choice of generating neighborhood possibly only changes the constant $C$ in \Cref{eq:polynomialgrowth}, but not the exponent $D$. 
The constant $D$ will be called the \emph{growth exponent} of $G$. Any group of polynomial growth is unimodular and amenable.

We will exploit that a sequence consisting of (closures of) adequate metric balls forms a strong F\o lner exhaustion sequence satisfying additional volume asymptotic properties. For this, following \cite[Section 4.1]{breuillard2014geometry}, we say that a measurable
metric $d : G \times G \to [0,\infty)$ is \emph{periodic} if it satisfies the following properties:
\begin{enumerate}
    \item[(m1)] $d$ is left-invariant, i.e., $d(x,y) = d(zx, zy)$ for all $x,y,z \in G$;
    \item[(m2)] $d$ is metrically proper, i.e., the preimage of a bounded set of the map $y\mapsto d(e,y)$
is bounded;
    \item[(m3)] $d$ is locally bounded, i.e., the image under $d$ of any compact subset of $G \times G$ is bounded;
    \item[(m4)] $d$ is asymptotically geodesic, i.e., for every $\varepsilon > 0$, there exists $s>0$ such that, for any $x,y \in G$, there exists a sequence of points $x_1 = x, x_2, ..., x_n = y$ in $G$ such that
    \[
    \sum_{i = 1}^{n-1} d(x_i, x_{i+1} ) \leq (1+ \varepsilon) d(x,y)
    \]
    and $d(x_1, x_{i+1}) \leq s$ for all $i = 1, ..., n-1$.
\end{enumerate}
 Given a periodic metric $d$ on $G$, we denote the associated open ball of radius $r>0$ centered at $e \in G$ by $B_r := \{ x \in G : d(x,e) < r \} $ and the closed ball by $\overline{B}_r = \{ x \in G : d(x,e) \leq r \}$. The complement $(B_r)^c = G \setminus B_r$ of a ball $B_r$ will often be denoted by $B_r^c$ too.
 For each $r>0$, the balls $B_r$ and $\overline{B}_r$ are relatively compact, and there exists $r_0 > 0$ such that $B_{r_0}$ contains a unit neighborhood. The latter property implies, in particular, that the closure $\overline{B_r} \subseteq B_{r+r_0}$ for all $r > 0$.
For further basic properties of periodic metrics, see \cite[Section 4.2]{breuillard2014geometry}. 

By \cite[Corollary 1.6]{breuillard2014geometry}, any periodic metric $d$ has \emph{exact polynomial growth}, in the sense that there exists a constant $C(d) > 0$ such that
\[
\lim_{r\to \infty} \frac{\mu_G (\overline{B}_r)}{r^D} = C(d).
\]
In particular, this implies that
\[
\mu_G( \overline{B}_r) \asymp r^D \quad \text{and} \quad \mu_G (B_r) \asymp r^D  \quad \text{for} \quad r \gg 1.
\]

We will often additionally assume that a periodic metric satisfies an annular decay property. Explicitly, a periodic metric $d$ on $G$ is said to satisfy the \emph{$\delta$-annular decay property ($\delta$-AD)} if there exist constants $c>0$ and $\delta \in (0,1] $ such that, for all $r\in [1 , \infty)$ and $s\in (0,r]$, 
   \begin{equation}\label{eq: WAD}
       \mu_G(B_{r}\setminus B_{r-s})\leq c \left(  \frac{s}{r}\right)^{\delta}\mu_G(B_r).
   \end{equation} 
We simply say that $d$ satisfies the \emph{annular decay property} if it satisfies the $\delta$-annular decay property for some $\delta \in (0, 1]$.

We mention that there are various definitions of annular decay properties in the literature; see, e.g., \cite{tessera2007volume, fuehr2017density, buckley1999maximal, auscher2013local, lin2011boundedness} and the references therein. The above definition is
most convenient for our purposes and slightly weaker than the definitions in \cite{buckley1999maximal, auscher2013local, lin2011boundedness}.

We mention the following examples of periodic metrics; see also \cite[Section 4.3]{breuillard2014geometry}.

\begin{examplex} Let $G$ be a group of polynomial growth.
    \begin{enumerate}
        \item[(1)]  The word metric with respect to a compact symmetric generating set is periodic, and satisfies the annular decay property, cf. \cite[Theorem 4]{tessera2007volume} and \cite[Remark 4.1(ii)]{lin2011boundedness}. 
        \item[(2)] If $G$ is a connected Lie group, then the left-invariant Riemannian metric and Carnot-Caroth\'eodory metric are periodic. Moreover, as such metrics $d$ make $(G, d)$ into a length space, they satisfy the annular decay property, cf. \cite[Corollary 2.2]{buckley1999maximal} and \cite[Proposition 4.1]{lin2011boundedness}.
        \item[(3)] If $G$ is a homogeneous nilpotent Lie group and $d$ is a metric associated to a homogeneous norm on $G$, then $d$ is periodic and $\mu_G (B_r) = r^D \mu(B_1)$ for some $D>0$, so that $d$ satisfies the annular decay property with $\delta=1$.
    \end{enumerate}
\end{examplex}

The following lemma is a standard fact, and hence we omit its simple proof. A similar statement under slightly different conditions can be bound in \cite[Proposition 4.4]{beckus2021linear}.

\begin{lemma} \label{lem:folner-balls}
    Let $G$ be a group of polynomial growth and let $d$ be a periodic metric on $G$ satisfying the annular decay property. Let $r_0 > 0$ be such that $B_{r_0}$ is a unit neighborhood.

    For any sequence $(r_n)_{n \in \mathbb{N}}$ of points $r_n >0$ satisfying $r_{n+1} - r_n > r_0$, the sequence $(K_n)_{n \in \mathbb{N}}$ consisting of (closures of) balls $K_n := \overline{B_{r_n}}$ is a strong F\o lner exhaustion sequence. 
\end{lemma}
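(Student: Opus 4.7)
The plan is to verify the defining properties of a strong F\o lner exhaustion sequence: each $K_n$ is a compact set of positive measure containing the identity, the sequence exhausts $G$ in the nested sense $K_n \subseteq (K_{n+1})^{\circ}$, and it satisfies the strong F\o lner condition \eqref{eq:strong_folner}. I would dispatch the routine items first and devote the bulk of the argument to the F\o lner condition, which is where the annular decay property enters.

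Each $K_n = \overline{B_{r_n}}$ is compact and contains $e$ (since $d(e,e) = 0 < r_n$), hence has positive Haar measure. The hypothesis $r_{n+1} - r_n > r_0 > 0$ forces $r_n \to \infty$, and then metric properness (m2) yields $G = \bigcup_n K_n$. For the nested inclusion I would chain
\[
K_n = \overline{B_{r_n}} \;\subseteq\; B_{r_n + r_0} \;\subseteq\; B_{r_{n+1}} \;\subseteq\; (K_{n+1})^{\circ},
\]
using the recorded property $\overline{B_r} \subseteq B_{r + r_0}$, the assumption $r_{n+1} > r_n + r_0$, and the fact that the open ball $B_{r_{n+1}}$ lies inside the interior of $K_{n+1}$.

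The core step is the strong F\o lner condition. Fix a compact $K \subseteq G$; by local boundedness (m3), $R := \sup_{k \in K} d(e,k)$ is finite. By left-invariance (m1), any point $yk \in K_n K$ satisfies $d(e, yk) \leq d(e,y) + d(e,k) \leq r_n + R$, so $K_n K \subseteq \overline{B_{r_n + R}}$; conversely, if $y \in K_n^c$ then $d(e,y) > r_n$, so any $yk \in K_n^c K$ satisfies $d(e, yk) > r_n - R$. Combining these one-sided bounds with $\overline{B_r} \subseteq B_{r + r_0}$ gives, for $n$ large enough that $r_n > R$,
\[
K_n K \cap K_n^c K \;\subseteq\; B_{r_n + R + r_0} \setminus B_{r_n - R}.
\]
Applying the $\delta$-annular decay property \eqref{eq: WAD} with outer radius $r = r_n + R + r_0$ and thickness $s = 2R + r_0$, and then using $\mu_G(B_{r_n + R + r_0}) \asymp r_n^D$ and $\mu_G(K_n) \asymp r_n^D$ from the exact polynomial growth of $d$, I would conclude
\[
\frac{\mu_G(K_n K \cap K_n^c K)}{\mu_G(K_n)} \;\lesssim\; \bigg(\frac{2R + r_0}{r_n}\bigg)^{\!\delta} \;\longrightarrow\; 0.
\]

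The only mildly delicate point is the bookkeeping in the final step: compressing the compact set $K$ into the single quantity $R$ via left-invariance, and then realising $K_n K \cap K_n^c K$ as a subset of a metric annulus whose thickness depends only on $K$ and the fixed constant $r_0$. Once this inclusion is in place, the annular decay hypothesis together with the asymptotic $\mu_G(B_r) \asymp r^D$ reduces the estimate to a ratio of two powers of $r_n$, and the F\o lner condition follows with an explicit rate of order $r_n^{-\delta}$.
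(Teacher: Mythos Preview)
The paper omits the proof of this lemma, labeling it a standard fact, so there is nothing to compare against directly. Your treatment of the strong F\o lner condition---trapping $K_nK \cap K_n^cK$ inside an annulus of fixed thickness via left-invariance and then invoking $\delta$-annular decay together with $\mu_G(B_r) \asymp r^D$---is correct and is exactly the intended mechanism.

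Two small issues in the exhaustion part. First, ``contains $e$, hence has positive Haar measure'' is not valid reasoning; what you want is that $K_n$ contains a nonempty open set, which holds once $r_n \geq r_0$ since then $K_n \supseteq B_{r_0}$ and the latter contains a unit neighborhood by hypothesis. Second, and more subtly, the inclusion $B_{r_{n+1}} \subseteq (K_{n+1})^\circ$ presumes that the metric ball $B_{r_{n+1}}$ is \emph{topologically} open, but a periodic metric is only required to be measurable (not continuous), so this does not follow from axioms (m1)--(m4) alone. It does hold for the examples the paper actually uses (Riemannian, Carnot--Carath\'eodory, and word metrics with open generating set), and in full generality one can argue instead that $B_{s-r_0} \subseteq (\overline{B_s})^\circ$ by noting that $d(e,x) < s - r_0$ forces $xU \subseteq B_s$ for the unit neighborhood $U \subseteq B_{r_0}$; combined with $\overline{B_{r_n}} \subseteq B_{r_n+r_0}$ this closes the gap provided $r_{n+1} - r_n \geq 2r_0$, a harmless strengthening of the stated hypothesis.
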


Throughout, given a periodic metric satisfying the annular decay property, we will refer to a strong F\o lner exhaustion sequence as defined in \Cref{lem:folner-balls} as a \emph{strong F\o lner exhaustion sequence associated to $d$.}

\subsection{Polynomial matrix coefficient decay} \label{sec:polynomial_decay}
Let $d$ be a periodic metric on $G$ and let $r_0 > 0$ be such that $B_{r_0} =: Q$ contains a unit neighborhood.
The length function $|\cdot| : G \to [0,\infty)$ associated to $d$ is defined by $|x| = d(x,e)$. For $\alpha > 0$, define the function $w_{\alpha} : G \to [1,\infty)$ by $w_{\alpha} (x) = (1+|x|)^{\alpha}$. By the triangle inequality, for any $x, y \in G$,
\[
w_{\alpha} (xy) = (1+|xy|)^{\alpha} \leq (1+|x| + |y|)^{\alpha} \leq w_{\alpha}(x) w_{\alpha} (y),
\]
and thus $w_{\alpha}$ is submultiplicative.

As in \Cref{sec:analyzing}, the set $\mathcal{D}_{\pi, w_{\alpha}}$ is defined to consist of those vectors $g \in \Hpi$ such that
\[
\int_G |M_Q V_g g (x)|^2 (1+|x|)^{\alpha} \; d\mu_G (x)  < \infty.
\]
A simple sufficient condition for vectors to be in $\mathcal{D}_{\pi, w_{\alpha}}$ is given by the following lemma. 

\begin{lemma} \label{lem:polynomialdecay0}
 Let $G$ be a group of polynomial growth with growth exponent $D$, let $d$ be a periodic metric satisfying the $\delta$-annular decay property and let $\alpha > 0$. Suppose $g \in \Hpi$ is such that there exist $C_0 >0$ and $\beta>1-\delta$ satisfying
 \begin{align} \label{eq:polynomialdecay0}
 |\langle g, \pi(x) g \rangle | \leq C_0 (1+|x|)^{-\frac{D +\alpha +\beta }{2}} \quad \text{for all} \quad x \in G.
 \end{align}
 Then $g \in \mathcal{D}_{\pi, w_{\alpha}}$.
\end{lemma}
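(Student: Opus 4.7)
The plan is to show that $|M_Q V_g g(x)|^2 (1+|x|)^\alpha$ has polynomial decay of the right order to be integrable against the Haar measure, using the annular decay property to control the volume of annuli.

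First, I would use the left-invariance and triangle inequality of the periodic metric $d$ to upgrade the pointwise hypothesis \eqref{eq:polynomialdecay0} to a bound on $M_Q V_g g$. Since $Q = B_{r_0}$, for any $q \in Q$ we have $|xq| \geq |x| - r_0$, and hence $1 + |xq| \geq \tfrac{1}{2}(1+|x|)$ whenever $|x| \geq 2r_0 + 1$. Applying the hypothesis to each $xq$ in place of $x$ then gives
\[
M_Q V_g g(x) = \sup_{q \in Q} |\langle g, \pi(xq) g\rangle| \leq C_0' (1+|x|)^{-\frac{D+\alpha+\beta}{2}}
\]
for all $x$ outside some compact set, and the quantity is trivially bounded on that compact set (since $V_g g$ is continuous and $Q$ is relatively compact). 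Squaring and multiplying by the weight yields $|M_Q V_g g(x)|^2 w_\alpha(x) \lesssim (1+|x|)^{-(D+\beta)}$, and the compact-set contribution to the integral is finite.

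The remaining task is to show
\[
\int_G (1+|x|)^{-(D+\beta)} \, d\mu_G(x) < \infty,
\]
and this is exactly where $\beta > 1-\delta$ and the $\delta$-annular decay property enter. I would decompose $G = \bigcup_{n \in \mathbb{N}} A_n$ into dyadic or unit annuli $A_n = B_{n+1} \setminus B_n$ (outside a fixed compact set). On $A_n$ the integrand is $\lesssim (1+n)^{-(D+\beta)}$, while \eqref{eq: WAD} applied with $s=1$ and $r=n+1$ gives
\[
\mu_G(A_n) \leq c (n+1)^{-\delta} \mu_G(B_{n+1}) \lesssim (n+1)^{D-\delta},
\]
using that $\mu_G(B_{n+1}) \asymp (n+1)^D$ by exact polynomial growth. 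Summing,
\[
\int_G (1+|x|)^{-(D+\beta)} \, d\mu_G(x) \lesssim \sum_{n \geq 1} n^{-(D+\beta)} \cdot n^{D-\delta} = \sum_{n \geq 1} n^{-(\beta+\delta)},
\]
which converges precisely because $\beta + \delta > 1$.

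The main step to be careful with is ensuring the interplay between $Q$-maximisation and the length function: the bound $1+|xq| \gtrsim 1+|x|$ for $q \in Q$ (which is a direct consequence of the triangle inequality for $d$ together with relative compactness of $Q$) is what lets the pointwise hypothesis on $V_g g$ pass to $M_Q V_g g$ without loss of the polynomial decay rate. Once this is established, everything else is a routine annular decomposition argument, and the threshold $\beta > 1-\delta$ is seen to be sharp exactly because the $\delta$-AD estimate produces the factor $n^{-\delta}$ needed to render $\sum n^{-(\beta+\delta)}$ summable.
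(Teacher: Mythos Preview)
Your proposal is correct and follows essentially the same route as the paper: pass the pointwise decay through $M_Q$ via the triangle inequality for $d$, reduce to integrability of $(1+|x|)^{-(D+\beta)}$, and decompose into unit annuli whose measures are controlled by the $\delta$-annular decay property together with $\mu_G(B_r)\asymp r^D$, yielding the convergent series $\sum n^{-(\beta+\delta)}$. The paper's argument differs only cosmetically (it uses the slightly overlapping annuli $\{k-\tfrac12 < |x| < k+1\}$ rather than $B_{n+1}\setminus B_n$), but the mechanism and the role of the threshold $\beta>1-\delta$ are identical.
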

\begin{proof}
If $g \in \Hpi$ is such that \Cref{eq:polynomialdecay0} holds, then its local maximal function satisfies
 \[
 (M_Q V_gg)(x) \lesssim \sup_{z \in Q} (1+|xz|)^{-\frac{D+\alpha +\beta}{2}} \lesssim_{Q} (1+|x|)^{-\frac{D +\alpha +\beta}{2}},
 \]
 so that 
 \[
 \int_G |M_Q V_g g (x) |^2 w_{\alpha} (x) \; d\mu_G (x) \lesssim \mu_G (B_1) + \int_{B_1^c} (1+|x|)^{-
(D +\beta)} \; d\mu_G (x).
 \]
Using the $\delta$-annular decay property \eqref{eq: WAD}, the integral in the right-hand side above can be estimated as
\begin{align*}
\int_{B_1^c} (1+|x|)^{-
(D +\beta)} \; d\mu_G (x) &\lesssim \sum_{k = 1}^{\infty} \int_{k - \frac{1}{2} < d(x,e) < k +1} (1+|x|)^{-(D+\beta)} \; d\mu_G (x) \\
&\lesssim \sum_{k = 1}^{\infty} k^{-(D+\beta)} \mu_G \big( B_{k+1} \setminus B_{k-\frac{1}{2}} \big) \\
&\lesssim \sum_{k = 1}^{\infty} k^{-(D+\beta)} k^{-\delta} k^D \\
&= \sum_{k = 1}^{\infty} k^{-(\beta + \delta)},
\end{align*}
which converges as $\beta + \delta > 1$.
\end{proof} 

For connected, simply connected nilpotent Lie groups, the set of vectors satisfying the polynomial decay condition \eqref{eq:polynomialdecay0} is norm dense in $\Hpi$; see \Cref{sec:nilpotent}.

\subsection{Counting function estimates}
Combined with \Cref{lem:polynomialdecay0}, the following theorem corresponds to \Cref{thm:intro2}.

 \begin{theorem}\label{thm: lower bound frame PG}
 Let $G$ be a group of polynomial growth with growth exponent $D$ and let $d$ be a periodic metric satisfying the $\delta$-annular decay property. 
 Let $(\overline{B_{r_n}})_{n \in \mathbb{N}}$ be any strong F\o lner exhaustion sequence associated to $d$.
Then the following assertions hold:

\begin{enumerate}
\item[(i)] If there exist $g \in \mathcal{D}_{\pi, w_{\alpha}}$ with $\alpha > 0$ and $\Lambda \subseteq G$ such that $\pi (\Lambda) g$
is a frame, then 
\[
\inf_{x \in G } \frac{\# ( \Lambda \cap x\overline{B_{r_n}} )}{\mu_G (\overline{B_{r_n}})}  \geq d_{\pi} \, \bigg( 1 - C r_n^{-\frac{\alpha\delta}{\delta+\alpha}}   \bigg), \quad n \gg 1.
\]
\item[(ii)] If there exist $g \in \mathcal{D}_{\pi, w_{\alpha}}$ with $\alpha > 0$ and $\Lambda \subseteq G$ such that $\pi (\Lambda) g$
is a Riesz sequence, then 
\[
\sup_{x \in G } \frac{\# ( \Lambda \cap x\overline{B_{r_n}} )}{\mu_G (\overline{B_{r_n}})}  \leq d_{\pi} \, \bigg( 1 + Cr_n^{-\frac{\alpha\delta}{\delta+\alpha}}  \bigg), \quad n \gg 1.
\]
\end{enumerate}
The constant $C>0$ depends on $g$, $\alpha$, $d$ and the frame and Riesz bounds.
\end{theorem}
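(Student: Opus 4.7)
The plan is to feed the already-proven Theorems \ref{thm: lower bound frame} and \ref{thm: lower bound riesz} into a quantitative bound for the error terms $I_n$ and $J_n$ under the stronger localization $g \in \mathcal{D}_{\pi, w_\alpha}$. Since $\mu_G(\overline{B_{r_n}}) \asymp r_n^D$ by exact polynomial growth, it suffices to prove
\[
\max\{I_n, J_n\} \lesssim r_n^{D - \alpha\delta/(\alpha + \delta)}, \quad n \gg 1,
\]
with implicit constants depending only on $g$, $\alpha$, $d$ and the frame/Riesz bounds. The two treatments are parallel after exchanging the roles of the inner and outer variable, so I would concentrate on $I_n$. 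For the application of Theorem \ref{thm: lower bound frame} I would fix $Q = \overline{B_{r_0}}$ for some small $r_0 > 0$ making $Q$ a relatively compact symmetric unit neighborhood.

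The core step is a two-scale decomposition of the outer integral at a tunable radius $r_n - \rho$, with $\rho \in (r_0, r_n/2)$ optimized at the end. Writing $K_n = \overline{B_{r_n - \rho}} \cup (\overline{B_{r_n}} \setminus \overline{B_{r_n - \rho}})$ splits $I_n$ into a deep-interior piece $I_n^{(1)}$ and a boundary-annulus piece $I_n^{(2)}$. For $I_n^{(1)}$: since $Q \subseteq \overline{B_{r_0}}$, every $z \in K_n^c Q$ satisfies $|z| > r_n - r_0$, hence for $y \in \overline{B_{r_n - \rho}}$ the triangle inequality yields $|y^{-1}z| \geq |z| - |y| > \rho - r_0$. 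The substitution $u = y^{-1}z$ combined with $w_\alpha$-integrability then gives
\[
I_n^{(1)} \leq \mu_G(\overline{B_{r_n - \rho}}) \int_{B_{\rho - r_0}^c} |M_Q V_g g(u)|^2 \, d\mu_G(u) \lesssim r_n^D (\rho - r_0)^{-\alpha}.
\]
For $I_n^{(2)}$, Tonelli combined with the $\delta$-AD property \eqref{eq: WAD} yields $I_n^{(2)} \leq \mu_G(\overline{B_{r_n}} \setminus \overline{B_{r_n - \rho}}) \|M_Q V_g g\|_{L^2}^2 \lesssim \rho^\delta r_n^{D - \delta}$.

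Balancing the two contributions via $r_n^D \rho^{-\alpha} \asymp \rho^\delta r_n^{D - \delta}$ forces the choice $\rho \asymp r_n^{\delta/(\alpha + \delta)}$ and delivers the desired rate $I_n \lesssim r_n^{D - \alpha\delta/(\alpha + \delta)}$. The main bookkeeping obstacle I anticipate is tracking the buffer $r_0$ (and in particular the validity range $\rho > r_0$, which is why the bound is only asserted for $n \gg 1$) together with ensuring the implicit constants depend only on $g$, $\alpha$, $d$ and the frame/Riesz bounds but not on $\Lambda$ or $x$. The Riesz case (ii) is handled by the symmetric two-scale splitting of the inner integral over $K_n Q$: for $y \in K_n^c$ and $z \in \overline{B_{r_n - \rho}}$ one gets $|y^{-1}z| \geq |y| - |z| > \rho$ after an application of unimodularity to interchange the order of integration, while the residual annulus $\overline{B_{r_n + r_0}} \setminus \overline{B_{r_n - \rho}}$ is absorbed by $\delta$-AD exactly as before, producing the same rate.
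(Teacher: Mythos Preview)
Your proposal is correct and follows essentially the same approach as the paper: both reduce to Theorems~\ref{thm: lower bound frame} and~\ref{thm: lower bound riesz}, then bound the error terms by splitting at the scale $\rho \asymp r_n^{\delta/(\alpha+\delta)}$, using the $w_\alpha$-integrability of $M_Q V_g g$ for the deep-interior piece and the $\delta$-annular decay for the boundary annulus. The only cosmetic difference is that the paper first inserts the weight $(1+|y^{-1}z|)^\alpha$ to collapse the double integral to a single integral $\int_{B_{s_n}} (1+s_n-|x|)^{-\alpha}\,d\mu_G(x)$ and then splits \emph{that}, whereas you split the outer variable of the double integral directly; the resulting two pieces and the optimization are identical.
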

\begin{proof}
Let $r_0 > 0$ be such that $B_{r_0}$ contains a unit neighborhood.
 Throughout the proof, we apply the results from \Cref{sec:counting_amenable} with $Q = B_{r_0}$. To ease notation, we define $\varphi(r)=r^{\frac{\delta}{\delta+\alpha}}$ for $r > 0$. 
 \\~\\
 (i) We start by recalling that $(r_n)_{n\in \mathbb{N}}$ satisfies
\begin{equation}\label{eq: r0 properties}
    r_{n+1}-r_n>r_0 \quad \text{and} \quad B_{r_n}\subseteq\overline{B_{r_n}}\subseteq B_{r_n+r_0}.
\end{equation} 
 We proceed by estimating the quantity $I_n$ appearing in \Cref{thm: lower bound frame}. For this, notice that for all $n\in \mathbb{N}$ such that $r_n>r_0$,  we have
	$(G \setminus \overline{B_{r_n}})B_{r_0}\subseteq            (G \setminus B_{r_n}) B_{r_0}\subseteq G \setminus B_{{r_n}-r_0},$
    since if $z\in G \setminus B_{r_n}$ and $z'\in B_{r_0}$, then  $|zz'| \geq |z| - |z'|\geq {r_n}-r_0.$
    In addition, if $x\in \overline{B_{r_n}}$ and $y\in G \setminus B_{{r_n}-r_0}$, then 
    \begin{align*}
    1+|y^{-1} x| \gtrsim_{r_0} 1+2r_0+|y| -|x| \gtrsim_{r_0}1+r_n+r_0-|x|,
    \end{align*}
    Therefore, by our assumption on $g$,
    \begin{align*}
    I_n &\leq\int_{\overline{B_{r_n}}}\int_{G\setminus B_{r_n-r_0}}|M_{Q} V_g g(y^{-1}x)|^2\,d\mu_{G}(y)\,d\mu_{G}(x) \\
    &\lesssim_{\alpha, r_0} \,\int_{\overline{B_{r_n}}}\int_{G\setminus B_{r_n-r_0}}\frac{(1+|y^{-1} x|)^{\alpha}}{\left(1+r_n+r_0-|x|\right)^{\alpha}}\,|M_{Q} V_g g(y^{-1}x)|^2\,d\mu_{G}(y)\,d\mu_{G}(x)\\
    & \lesssim_{\alpha,r_0} \|M_{Q} V_g g\|_{L^2_{\alpha}}^2\,\int_{B_{r_n+r_0}}\frac{1}{\left(1+r_n+r_0-|x|\right)^{\alpha}} \,d\mu_{G}(x),
    \end{align*}
where the last inequality used that  $\overline{B_{r_n}}\subseteq B_{r_n+r_0}$. 
To estimate $I_n$ further, we set $s_n:=r_n+r_0$ and split the integral in the last inequality as
\begin{align*} 
&\int_{B_{s_n}}\frac{1}{\left(1+s_n-|x| \right)^{\alpha}} \,d\mu_{G}(x) \\
&\quad \quad \quad \quad = \bigg( \int_{B_{s_n-\varphi(s_n)}}+\int_{B_{s_n}\setminus B_{s_n-\varphi(s_n)}} \bigg) \frac{1}{\left(1+s_n-|x|\right)^{\alpha}} \; d\mu_G (x) \\
&\quad \quad \quad \quad =: I_n^{(1)} + I_n^{(2)}.
\end{align*}  
Then, for $I_n^{(1)}$, we have 
$$I_n^{(1)} \leq \int_{B_{s_n-\varphi(s_n)}}\frac{1}{\left(1+\varphi(s_n)\right)^{\alpha}} \,d\mu_{G}(x)
\leq \varphi(s_n)^{-\alpha} \mu_G(B_{s_n}) 
\lesssim_{G, r_0, \alpha}\varphi(r_n)^{-\alpha}\, \mu_G(B_{r_n}),$$
for all $n$ sufficiently large, where the last inequality used that $\mu_G (B_r) \asymp r^D$ for $r \gg 1$.

For $I_n^{(2)}$, we use the $\delta$-annular decay property \eqref{eq: WAD} to estimate
 \begin{align*}
 I_n^{(2)} &\leq \int_{B_{s_n} \setminus B_{s_n-\varphi(s_n)}}  \,d\mu_{G}(x) 
 \leq \mu_{G}(B_{s_n} \setminus B_{s_n-\varphi(s_n)}) \lesssim  \left( \frac{\varphi(s_n)}{s_n}\right)^{\delta}\mu_G(B_{s_n}) \\
&\lesssim_{G, r_0, \alpha, \delta}\left( \frac{\varphi(r_n)}{r_n}\right)^{\delta}\mu_G(B_{r_n}), 
\end{align*}
for all $n$ sufficiently large.
Hence, combining the obtained estimates gives
 \[
 I_n \lesssim_{G, r_0, \alpha, \delta, g}  r_n^{-\frac{\alpha\delta}{\delta+\alpha}} \mu_G (\overline{B_{r_n}}), \quad n \gg 1.
 \]
 By \Cref{thm: lower bound frame}, it follows therefore that
 \[
 \inf_{x \in G} \# ( \Lambda \cap x \overline{B_{r_n}}) \geq d_{\pi} \,\mu_G (\overline{B_{r_n}}) \bigg( 1 - C r_n^{-\frac{\alpha\delta}{\delta+\alpha}}  \bigg)
 \]
 for $n \gg 1$, which completes the proof of (i).   
 \\~\\ 
 (ii) The proof of (ii) follows by similar arguments as in
 the case (i). To see this, observe that
	$$\overline{B_{r_n}}B_{r_0}\subseteq B_{r_n+2r_0} \quad \text{and} \quad \overline{B_{r_n}}^c\subseteq B_{r_n}^c,$$
	and hence the quantity $J_n$ in \Cref{thm: lower bound riesz} can be estimated by 
\begin{align*}
J_n &\leq\int_{B_{r_n+2r_0}}\int_{G \setminus B_{r_n}}|M_{Q} V_g g(y^{-1}x)|^2\,d\mu_{G}(x)\,d\mu_{G}(y) \\
&\lesssim_{\alpha, r_0}\int_{B_{r_n+2r_0}}\int_{G \setminus B_{r_n}}\frac{(1+|y^{-1} x|)^{\alpha}}{\left(1+2r_0+r_n-|y|\right)^{\alpha}}\,|M_{Q} V_g g (y^{-1}x)|^2\,d\mu_{G}(x)\,d\mu_{G}(y)\\
& \lesssim_{\alpha, r_0}\|M_Q V_g g\|_{L^2_{\alpha}}^2\,\int_{B_{r_n+2r_0}}\frac{1}{\left(1+2r_0+r_n-|y|\right)^{\alpha}} \,d\mu_{G}(y),
\end{align*}
where the second inequality follows by the triangle inequality. Working as for $I_n$ in part (i) yields that 
\[
 J_n \lesssim_{\alpha, G, g} r_n^{-\frac{\alpha\delta}{\delta+\alpha}} \mu_G (\overline{B_{r_n}}), \quad n \gg 1,
 \]
 and thus the claim follows from \Cref{thm: lower bound riesz}. 
\end{proof}

\section{A quantitative estimate for relative denseness} \label{sec:hole}
Throughout this section, we continue to assume that $G$ is a compactly generated group of polynomial growth with group exponent $D$ and fix a periodic metric $d$ on $G$ (see \Cref{sec:metric}). 

Denoting by $B_r (x)$ the $d$-ball of center $x \in G$ and radius $r> 0$, recall that a discrete set $\Lambda \subseteq G$ is said to be \emph{$r$-relatively dense} (for $r>0$) if $G = \bigcup_{\lambda \in \Lambda} B_r (\lambda)$. The following theorem provides a quantitative version of the well-known fact that the index set of a coherent frame must be relatively dense.  

\begin{theorem} \label{thm:conditionnumber}
    Let $G$ be a polynomial growth group of exponent $D$, and let $d$ be a periodic metric on $G$ satisfying the $\delta$-annular decay property. Let $r_0 >1$ be such that $B_{r_0}$ contains a unit neighborhood.
    
Suppose there exists nonzero 
$g \in \Hpi $ and constants $C_0 > 0$, $\alpha>1-\delta$  such that 
\begin{align} \label{eq:localisation}
\frac{|\langle g, \pi (x) g \rangle |}{\| g \|^2} \leq C_0 (1+ |x|)^{-\frac{D+\alpha}{2}} \quad \text{for all} \quad x \in G.
\end{align} 
If $\Lambda \subseteq G$ is such that $\pi(\Lambda) g$ is a frame for $\Hpi$ with frame bounds $A,B$, then there exists  $C = C(G, g, \alpha, r_0,\delta) >0$ such that if $r>r_0$ with 
\begin{align} \label{eq:radius_estimate}
r > \bigg( C_0^2\, C\, \frac{B}{A} \bigg)^{1/{(\alpha+\delta-1)}}, 
\end{align}
then $\Lambda \cap B_r (z) \neq \emptyset$ for any $z \in G$. 

The constant $C$ is invariant under scaling $g$ by a constant in $\mathbb{C} \setminus \{0\}$, and hence so is the right-hand side of \eqref{eq:radius_estimate}.
\end{theorem}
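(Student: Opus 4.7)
The plan is to argue by contradiction. Suppose there exists $z \in G$ with $\Lambda \cap B_r(z) = \emptyset$, and test the lower frame inequality on the vector $\pi(z)g$. Since $\pi$ is projective, its cocycle has unit modulus, and hence $|\langle \pi(z)g, \pi(\lambda)g\rangle| = |\langle g, \pi(z^{-1}\lambda)g\rangle|$. Combining the frame condition with the localization hypothesis \eqref{eq:localisation} yields
\[
A \|g\|^2 \;\leq\; \sum_{\lambda \in \Lambda} |\langle g, \pi(z^{-1}\lambda)g\rangle|^2 \;\leq\; C_0^2\, \|g\|^4 \sum_{\lambda \in \Lambda \cap B_r(z)^c} (1+|z^{-1}\lambda|)^{-(D+\alpha)},
\]
where the sum collapses to $\lambda \notin B_r(z)$ by hypothesis on $z$. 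The remaining task is to show that this tail sum is $O(r^{-(\alpha+\delta-1)})$, uniformly in $z$, which will become incompatible with \eqref{eq:radius_estimate}.

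To estimate the tail sum, I would take $Q := B_{r_0}$ and apply \Cref{lem:amalgam} to $F(x) = (1+|z^{-1}x|)^{-(D+\alpha)/2}$ with $K = B_r(z)$. Left-invariance of $d$ and $Q \subseteq B_{r_0}$ give the pointwise bound $M_Q F(x) \lesssim (1+|z^{-1}x|)^{-(D+\alpha)/2}$ on the region where $|z^{-1}x|$ is bounded away from zero; since $B_r(z)^c Q \subseteq B_{r-r_0}(z)^c$, a left translation $y = z^{-1}x$ gives
\[
\sum_{\lambda \in \Lambda \cap B_r(z)^c} F(\lambda)^2 \;\lesssim\; \frac{\Rel_Q(\Lambda)}{\mu_G(Q)} \int_{B_{r-r_0}^c} (1+|y|)^{-(D+\alpha)}\, d\mu_G(y).
\]
Decomposing the integral into unit annuli $B_{k+1}\setminus B_k$, on which the integrand is $\asymp k^{-(D+\alpha)}$, and using the $\delta$-annular decay property combined with polynomial growth to bound the shell measure by $\lesssim k^{D-\delta}$, I obtain the convergent series $\sum_{k \geq r-r_0} k^{-(\alpha+\delta)} \lesssim r^{-(\alpha+\delta-1)}$, where convergence (and positivity of the exponent) rely crucially on $\alpha + \delta > 1$. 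Applying \Cref{lem:bessel} to bound $\Rel_Q(\Lambda) \leq C(g,Q)\, B/\|g\|^2$ and collecting the constants finally yields
\[
A \;\leq\; C_0^2\, B\, C\, r^{-(\alpha+\delta-1)},
\]
with $C$ depending only on $(G, g, \alpha, r_0, \delta)$, which contradicts \eqref{eq:radius_estimate} and completes the proof.

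The main delicate point is the bookkeeping of the scaling in $g$: the factor $\|g\|^4$ produced by the localization hypothesis must be matched by the $\|g\|^{-2}$ coming from the Bessel bound on $\Rel_Q(\Lambda)$ and the $\|g\|^2$ on the left side of the frame inequality, so that the threshold in the end depends only on the scale-invariant ratio $B/A$ and the scale-invariant constant $C_0$. The scale invariance of $C$ then follows from the corresponding invariance of $C(g,Q)$ recorded in \Cref{lem:bessel}, together with the fact that the remaining factors depend only on $(G, \alpha, r_0, \delta)$. A secondary technicality is that the local maximal function enlarges the integration domain from $B_r(z)^c$ to $B_r(z)^c Q$, which merely degrades $r$ to $r-r_0$; this is harmless for $r > r_0$ and does not affect the exponent $\alpha + \delta - 1$.
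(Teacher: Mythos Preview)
Your proof is correct and follows essentially the same approach as the paper's: test the lower frame inequality at the vector $\pi(z)g$ (the paper equivalently replaces $\Lambda$ by $z^{-1}\Lambda$ and tests at $g$), bound the tail sum over $\Lambda\cap B_r(z)^c$ via \Cref{lem:amalgam} and \Cref{lem:bessel}, and control the resulting integral over $B_{r-r_0}^c$ by a dyadic annulus decomposition using polynomial growth and the $\delta$-annular decay property to get $O(r^{1-\alpha-\delta})$. The only cosmetic difference is that you apply the localization hypothesis \emph{before} invoking \Cref{lem:amalgam} (so the amalgam lemma is applied to the explicit polynomial $F$), whereas the paper applies \Cref{lem:amalgam} to $V_g g$ first and then bounds $M_Q V_g g$ pointwise via \eqref{eq:localisation}; the two orderings yield identical estimates, and your tracking of the $\|g\|$-powers to verify scale invariance matches the paper's.
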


\begin{proof}
Note that $\pi(\Lambda) g$ is a frame for $\Hpi$ with frame bounds $A,B$ if and only if $\pi(z \Lambda) g$ is a frame for $\Hpi$ with frame bounds $A, B$ for any $z \in G$.  As such, it suffices to prove the claim for $z = e$.
Using the lower frame bound $A>0$ for $\pi(\Lambda) g$, it follows that
\[
A \| g \|^2 \leq \sum_{\lambda \in \Lambda} |\langle g, \pi(\lambda) g \rangle |^2 = \sum_{\lambda \in \Lambda \cap B_{r}} |\langle g, \pi(\lambda) g \rangle |^2 + \sum_{\lambda \in \Lambda \cap B_{r}^c} |\langle g, \pi(\lambda) g \rangle |^2.
\]
An application of \Cref{lem:amalgam} and \Cref{lem:bessel} for $Q=B_{r_0}$ yields 
\begin{align*}
\sum_{\lambda \in \Lambda \cap B_{r}^c} |V_g g (\lambda) |^2 &\leq \frac{\Rel_{Q}(\Lambda)}{\mu_G (Q)} \int_{(G \setminus B_{r}) B_{r_0}} |M_{Q}V_g g (x) |^2 \; d\mu_G (x) \\ &\leq C'\, B\, \| g \|^{-2} \int_{(G \setminus B_{r})B_{r_0}} |M_{Q} V_g g (x) |^2 \; d\mu_G (x),
\end{align*}
where $C' = C'(g, r_0) > 0$ is a constant invariant under scaling $g$ by a constant in $\mathbb{C} \setminus \{0\}$ (cf. \Cref{lem:bessel}). 
We claim that the integral in the last inequality above can be estimated by 
\begin{equation}\label{eq: L2 for gap}
    \int_{(G \setminus B_r)B_{r_0}} |M_{Q} V_g g (x) |^2 \; d\mu_G (x) \leq C_0^2\, \| g \|^4 C'' \, r^{1-\delta-\alpha}
\end{equation}
for some $C''=C''(G, \alpha, r_0, \delta)>0$. Assuming for the moment that the estimate  \eqref{eq: L2 for gap} is true, a combination of the inequalities above yield
\begin{align*}
A \| g \|^2 \leq  \sum_{\lambda \in \Lambda \cap B_{r}} |\langle g, \pi(\lambda) g \rangle |^2 + C_0^2\, \| g \|^2 C' C''\, B\, {r}^{1-\delta-\alpha}
\end{align*}
Hence, if ${r} > \bigg( C_0^2\, C\, \frac{B}{A} \bigg)^{1/(\alpha+\delta-1)}$ with $C := C' C''$, then the sum over $\Lambda \cap B_r$ is strictly positive, which yields the desired conclusion.

To complete the proof, it remains to prove \eqref{eq: L2 for gap}.   For this, observe that  assumption \eqref{eq:localisation} yields
$$\frac{M_{Q} V_g g(x)}{\| g \|^2} \leq C_0 \,\sup_{z\in B_{r_0}}(1+|xz|)^{-\frac{D+\alpha}{2}}\lesssim_{\alpha, G, r_0}  C_0 \,(1+|x|)^{-\frac{D+\alpha}{2}}.$$
 Next, since $(G \setminus B_r) B_{r_0}\subseteq G \setminus B_{r-r_0}$ for $r>r_0$,  we have
\begin{align}
    \int_{(G \setminus B_r) B_{r_0}}|M_{Q} V_gg (x)|^2\, d\mu_{G}(x)&\leq  \int_{G \setminus B_{r-r_0}}|M_{Q} V_g g(x)|^2\, d\mu_{G}(x) \notag\\
    &\lesssim_{\alpha,  G, r_0}  C_0^2 \| g \|^4 \sum_{m=0}^{\infty}\int_{B_{r+m+r_0} \setminus B_{r+m-r_0}}(1+|x|)^{-D-\alpha}\, d\mu_{G}(x)\notag\\
    &\lesssim_{\alpha,  G, r_0} C_0^2 \| g \|^4 \sum_{m=0}^{\infty} (r+m)^{-D-\alpha}\mu_{G}(B_{r+m+r_0}\setminus B_{r+m-r_0}). \notag %\label{eq: annulus}.
\end{align} 
By a combination of the $\delta$-annular decay property \eqref{eq: WAD}, together with the volume growth $\mu_G (B_t) \lesssim_G t^D$ for $t\geq1$, it follows that
$$\mu_{G}(B_{r+m+r_0}\setminus B_{r+m-r_0})\lesssim_{G, \delta, r_0} (r+m)^{-\delta}\mu_{G}(B_{r+m+r_0})\lesssim_{G, \delta, r_0} (r+m)^{D-\delta} .$$
Therefore,
\begin{align}\label{eq:int sum}
    \int_{(G \setminus B_r) B_{r_0}}|M_Q V_g g (x)|^2\; d\mu_G (x) \lesssim_{G, \alpha, r_0, \delta} C_0^2\, \| g \|^4 \sum_{m=0}^{\infty} (r+m)^{-\delta-\alpha}.
\end{align}
Lastly, let us split the series in Equation \eqref{eq:int sum}  into
$$S_1:= \sum_{m=0}^{\lceil r \rceil}(r+m)^{-\delta-\alpha} \quad \text{and} \quad S_2:= \sum_{m=\lceil r \rceil+1}^{\infty}(r+m)^{-\delta-\alpha}.$$
 Since $r>1$ by assumption, we have $\lceil r \rceil\asymp r$, and it remains to prove that 
\begin{align} \label{eq:finalclaim}
    S_1, \, S_2 \lesssim_{\alpha, \delta} r^{1-\delta-\alpha}.
\end{align} 
To see this, note that the estimate \eqref{eq:finalclaim} for $S_1$ simply follows from the fact that $(r+m)^{-\delta-\alpha}\leq r^{-\delta-\alpha}$, while for $S_2$ the estimate \eqref{eq:finalclaim} follows from $(r+m)^{-\delta-\alpha}\leq m^{-\delta-\alpha}$ and a comparison of the series with the integral $\int_{r}^{\infty}t^{-\delta-\alpha}\, dt$ (which converges since $\alpha>1-\delta$) via the integral test for series. 
Substituting the estimates \eqref{eq:finalclaim} into \eqref{eq:int sum} proves the claim \eqref{eq: L2 for gap}, and thus completes the proof.
\end{proof}

\appendix

\section{Matrix coefficients on nilpotent Lie groups} \label{sec:nilpotent}
This section serves to show that the classes of localized vectors as defined in \Cref{sec:polynomial_decay} are nonempty for projective discrete series representations of simply connected nilpotent Lie groups. For this, we will relate such representations to $1$-representations that are square-integrable modulo their projective kernel. We start by recalling the relevant notions.

An irreducible $1$-representation $(\rho, \Hr)$ of a unimodular group $H$ is said to be \emph{square-integrable modulo its projective kernel $\pker(\rho)$} if there exists nonzero $g \in \Hr$ such that the function $\dot{x}:= x \pker(\rho) \mapsto |\langle g, \pi(x) g \rangle |$ is square-integrable with respect to the Haar measure on $H/\pker(\rho)$. Any such representation can be treated as a (projective) discrete series representation of the quotient group $G := H / \pker(\rho)$ in the following manner: If $s : G \to H$ is any Borel cross-section for the canonical projection $p : H \to G$, meaning that $p \circ s = \mathrm{id}_{G}$, then $\pi := \rho \circ s$ defines a  $\sigma$-representation of $G$ with cocycle
\begin{align} \label{eq:cocycle}
\sigma(x,y) = \chi (s(x) s(y) s(xy)^{-1}), \quad x,y \in G,
\end{align}
where $\chi : \pker(\rho) \to \mathbb{T}$ is the continuous function satisfying $\rho|_{\pker(\rho)} = \chi \cdot I_{\Hr}$, see, e.g., \cite[Proposition 5]{aniello2006square}. Moreover, if $s' : G \to H$ is any other Borel cross-section for $q$ and $\pi' := \rho \circ s'$, then
$\pi$ and $\pi'$ satisfy $\pi (x) = v(x) \pi'(x)$ for a Borel function $v : G \to \mathbb{T}$, see, e.g., \cite[Proposition 3]{aniello2006square}. In general, two projective representations $\pi$ and $\pi'$ are said to be \emph{projectively equivalent} if they satisfy $\pi (x) = v(x) \pi'(x)$ for a Borel function $v : G \to \mathbb{T}$.

The following lemma allows us to obtain results for projective discrete representations of nilpotent Lie groups from results on $1$-representations of such groups.

\begin{lemma} \label{lem:nilpotent_cover}
    Let $(\pi, \Hpi)$ be an irreducible $\sigma$-representation of a connected, simply connected nilpotent (resp. solvable) Lie group $G$ with  $\pker (\pi) = \{e\}$. Then there exists a connected, simply connected nilpotent (resp. solvable) Lie group $G'$ and an irreducible $1$-representation $(\rho, \Hpi)$ such that $G \cong G'/\pker(\rho)$ and such that an associated projective representation of $G'/\pker(\rho)$ arising from $\rho$ is equal to $\pi$.
\end{lemma}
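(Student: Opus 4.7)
The plan is to realize $\pi$ as arising from a genuine unitary representation of a central $\mathbb{R}$-extension $G'$ of $G$. The key ingredient is that for a simply connected nilpotent (resp. solvable) Lie group $G$, every Borel $2$-cocycle $\sigma : G \times G \to \mathbb{T}$ is cohomologous to $\exp(2\pi i \tilde{\sigma})$ for some smooth $\mathbb{R}$-valued $2$-cocycle $\tilde{\sigma}$. This follows from the vanishing of the Borel/Moore cohomology group $H^2(G, \mathbb{Z})$ for such $G$---a consequence of their contractibility---combined with the long exact sequence induced by the short exact sequence $0 \to \mathbb{Z} \to \mathbb{R} \to \mathbb{T} \to 0$.

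Given such a lift, I would first replace $\pi$ by a projectively equivalent representation $\pi'(x) = v(x)\pi(x)$ (for a Borel function $v : G \to \mathbb{T}$) whose cocycle equals $\sigma' := \exp(2\pi i \tilde{\sigma})$. I would then construct $G'$ as the manifold $G \times \mathbb{R}$ equipped with the multiplication
\[
(x, t)(y, u) = \bigl(xy,\; t + u + \tilde{\sigma}(x, y)\bigr).
\]
Smoothness of $\tilde{\sigma}$ makes $G'$ a Lie group, and the resulting central extension $1 \to \mathbb{R} \to G' \to G \to 1$ ensures that $G'$ is simply connected and nilpotent (resp. solvable) whenever $G$ is. Next, I would define $\rho : G' \to \mathcal{U}(\Hpi)$ by $\rho(x, t) := e^{2\pi i t} \pi'(x)$. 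A direct calculation using the $\sigma'$-representation identity for $\pi'$ and the multiplication law on $G'$ shows that $\rho$ is a genuine unitary representation, with irreducibility inherited from that of $\pi$. Since $\rho(e, t) = e^{2\pi i t} I_{\Hpi}$ and $\pker(\pi) = \{e\}$ by hypothesis, we obtain $\pker(\rho) = \{e\} \times \mathbb{R}$, and hence $G'/\pker(\rho) \cong G$. Finally, writing $v(x) = e^{i\theta(x)}$ for a Borel branch $\theta : G \to \mathbb{R}$ of $\arg v$, the Borel cross-section $\iota : G \to G'$ given by $\iota(x) := (x, -\theta(x)/(2\pi))$ satisfies $\rho(\iota(x)) = e^{-i\theta(x)} v(x) \pi(x) = \pi(x)$, so that the associated projective representation of $G'/\pker(\rho)$ arising from $\rho$ coincides with $\pi$.

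The main obstacle lies in the initial cocycle lifting step. For simply connected nilpotent Lie groups this is classical and is implicit in the construction of projective representations via Kirillov's orbit method. For simply connected solvable Lie groups it requires appealing to deeper results of Moore on the Borel cohomology of locally compact groups, together with the vanishing of integer-coefficient topological cohomology for the contractible underlying manifold; this cohomological verification is the most delicate aspect of the argument, as the remaining steps consist of fairly routine algebraic and measure-theoretic checks.
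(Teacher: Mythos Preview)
Your approach is correct in outline but takes a genuinely different route from the paper. The paper avoids the cohomological lifting step entirely: it forms the Mackey group $G_\sigma = G \times_\sigma \mathbb{T}$ (a connected nilpotent, resp.\ solvable, Lie group by \cite[Theorem~7.21]{varadarajan1985geometry}), lifts the genuine representation $\pi_\sigma(x,u) = u\,\pi(x)$ to the universal cover $G'$ of $G_\sigma$ via the covering homomorphism $p$, and sets $\rho = \pi_\sigma \circ p$. Since $G$ is simply connected, $G'$ is diffeomorphic to $G \times \mathbb{R}$ with $\pker(\rho) = \{e\} \times \mathbb{R}$, and the obvious section $x \mapsto (x,0)$ recovers $\pi$ exactly. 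This uses nothing beyond the existence and basic properties of universal covers of connected Lie groups. Your route is more explicit about the group law on $G'$ but leans on heavier machinery (Moore's Borel cohomology, Wigner-type comparison with $H^*(BG;\mathbb{Z})$), and there is one slip to correct: in the long exact sequence, surjectivity of $H^2(G,\mathbb{R}) \to H^2(G,\mathbb{T})$ is governed by the vanishing of $H^3(G,\mathbb{Z})$, not $H^2(G,\mathbb{Z})$. Both groups do vanish for contractible $G$, so the argument survives, but the paper's construction is shorter precisely because it sidesteps the ``delicate aspect'' you flag at the end.
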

\begin{proof}
 Define the group $G_{\sigma} := G \times \mathbb{T}$ with group multiplication $(x,u)(y,v) = (xy, uv \sigma(x,y))$. Then $G_{\sigma}$ is nilpotent (resp. solvable) as a central extension of a nilpotent (resp. solvable) group. Moreover, $G$ is a connected Lie group, and the map $\phi : G_{\sigma} \to G, \; (x,u) \mapsto x$ is a smooth group homomorphism, cf. \cite[Theorem 7.21]{varadarajan1985geometry}. The map $\pi_{\sigma} : G_{\sigma} \to \mathcal{U}(\Hpi)$, given by $\pi_{\sigma} (x,u) = u \pi (x)$, is a $1$-representation of $G_{\sigma}$, with projective kernel $\pker(\pi_{\sigma}) = \{e \} \times \mathbb{T}$. 
 
 Consider next the universal cover group $G_{\sigma}'$ of $G_{\sigma}$, which is the unique simply connected, connected Lie group with the same Lie algebra as $G_{\sigma}$, and hence also nilpotent (resp. solvable).
  Then $G_{\sigma}' = G_{\sigma} \times \mathbb{R}$ and the covering map $p : G_{\sigma}' \to G_{\sigma}, \; p(x, t) = (x, e^{2\pi i t})$ is a smooth group homomorphism. Hence, the map $\phi' := \phi \circ p$ is also a smooth group homomorphism, which implies that $G_{\sigma}' / (\{e\} \times \mathbb{R}) = G_{\sigma}' / \ker(\widetilde{\phi}) \cong \phi' (G_{\sigma}') = G$. Lastly, the map  $\rho_{\sigma} = \pi_{\sigma} \circ p$ is a $1$-representation of $G_{\sigma}'$ acting on $\Hpi$, with $\pker(\rho_{\sigma}) = \{ e \} \times \mathbb{R}$, which finishes the proof.
\end{proof}

The proof of \Cref{lem:nilpotent_cover} was communicated to us by Ulrik Enstad and Sven Raum.

A combination of \Cref{lem:nilpotent_cover} and the fact that square-integrable representations (modulo their projective kernel) admit nonzero matrix coefficients that are Schwartz functions \cite{pedersen1994matrix} yields the following lemma.

\begin{lemma}
Let $(\pi, \Hpi)$ be a projective discrete series representation of a connected, simply connected nilpotent Lie group $G$ and let $|\cdot|$ be a length function associated to a periodic metric.

There exist nonzero $g \in \Hpi$ such that, for every $N \in \mathbb{N}$, there exists $C_N > 0$ such that
\begin{align} \label{eq:polynomialdecay}
|\langle g, \pi(x) g \rangle | \leq C_N (1+|x|)^{-N} \quad \text{for all} \quad x \in G.
\end{align}
The set of such vectors is dense in $\Hpi$.
\end{lemma}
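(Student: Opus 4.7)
My plan is to lift $(\pi, \Hpi)$ to an ordinary representation of a nilpotent cover via \Cref{lem:nilpotent_cover}, invoke Pedersen's theorem \cite{pedersen1994matrix} to obtain a dense set of vectors with Schwartz matrix coefficients on the quotient, and then transfer Schwartz decay into decay with respect to the periodic length function $|\cdot|$.

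First I would observe that strict square-integrability \eqref{eq:SI} of $\pi$ forces $\pker(\pi) = \{e\}$: any $x \in \pker(\pi)$ satisfies $|\langle g, \pi(x) g\rangle| = \|g\|^2$, and since this is incompatible with integrability of $|V_g g|^2$ unless $\pker(\pi)$ has Haar measure zero, the closed subgroup $\pker(\pi)$ must be trivial. This places us in the hypothesis of \Cref{lem:nilpotent_cover}, which produces a connected, simply connected nilpotent Lie group $G'$ and an irreducible $1$-representation $(\rho, \Hpi)$ of $G'$ with $G \cong G'/\pker(\rho)$ and $\pi(x) = \rho(s(x))$ for some Borel cross-section $s : G \to G'$. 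In particular, $|\langle g, \pi(\cdot) g\rangle|$ is precisely the absolute value of the matrix coefficient of $\rho$ descended to $G'/\pker(\rho) \cong G$.

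Next I would apply Pedersen's theorem: since $\rho$ is an irreducible $1$-representation of the nilpotent Lie group $G'$ that is square-integrable modulo its projective kernel, there exists a dense subspace of vectors $g \in \Hpi$ for which $\dot{x} \mapsto \langle g, \rho(x) g\rangle$ is a Schwartz function on $G'/\pker(\rho)$. Under the identification with $G$, this gives a dense set of $g \in \Hpi$ such that $x \mapsto \langle g, \pi(x) g\rangle$ is a Schwartz function on the nilpotent Lie group $G$.

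Finally, I would translate Schwartz decay into the required estimate \eqref{eq:polynomialdecay}. Schwartz on the simply connected nilpotent Lie group $G$ means (via exponential coordinates) decay faster than any polynomial in the Euclidean norm on the Lie algebra. Any periodic length function $|\cdot|$ on $G$ is bounded above by a polynomial in the Euclidean norm (by comparison with a word metric or Carnot–Carath\'eodory metric, both of which are polynomially related to Euclidean length on nilpotent Lie groups by Guivarc'h–Pansu type estimates), so Schwartz decay implies arbitrary polynomial decay in $|\cdot|$, yielding \eqref{eq:polynomialdecay} for every $N \in \mathbb{N}$. The density assertion is inherited directly from Pedersen's theorem. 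The only technical point is the metric comparison step, which is standard for nilpotent Lie groups.
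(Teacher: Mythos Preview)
Your approach mirrors the paper's: lift via \Cref{lem:nilpotent_cover}, apply Pedersen's theorem \cite{pedersen1994matrix}, then compare metrics. Two points deserve attention.

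First, your argument for $\pker(\pi)=\{e\}$ has a gap: a closed subgroup of Haar measure zero need not be trivial (think of $\mathbb{R}\times\{0\}$ in $\mathbb{R}^2$), so you cannot pass directly from ``measure zero'' to ``trivial''. The paper instead observes that square-integrability forces $\pker(\pi)$ to be \emph{compact} (citing \cite[Lemma~2.3]{enstad2022sufficient}), and a simply connected nilpotent Lie group has no nontrivial compact subgroups. Your integrability idea can be repaired by noting that $|V_g g|$ is constant on left $\pker(\pi)$-cosets, so non-compactness of $\pker(\pi)$ would force $V_g g \equiv 0$ via the quotient integration formula --- but this should be said explicitly.

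Second, for the metric comparison the paper invokes \cite[Proposition~4.4]{breuillard2014geometry}, which gives $\frac{1}{C}|x|_U - C \leq |x| \leq C|x|_U + C$ for \emph{any} periodic length function against the word length $|\cdot|_U$; combined with Schwartz decay in $|\cdot|_U$ this immediately yields \eqref{eq:polynomialdecay}. Your route through Euclidean coordinates and Guivarc'h--Pansu estimates is valid for word and Carnot--Carath\'eodory metrics, but to cover an arbitrary periodic metric you would still need Breuillard's comparison result, so you may as well use it directly.
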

\begin{proof}
Since $\pi$ is irreducible and square-integrable, it follows that $\pker(\pi)$ is a compact subgroup of $G$ (see, e.g., \cite[Lemma 2.3]{enstad2022sufficient}), and hence it must be trivial. Therefore, by \Cref{lem:nilpotent_cover}, there exists an irreducible $1$-representation $(\rho, \Hpi)$ of a connected, simply connected nilpotent Lie group $G'$ such that $G \cong G' /\pker(\rho)$ and such that $\pi$ is equivalent a projective representation associated to $\rho$. Since $\pi$ is square-integrable, it follows that $\rho$ is square-integrable modulo $\pker(\rho)$. Hence, if $g \in \Hpi$ is any smooth vector for $\rho$, meaning that $x' \mapsto \rho(x') g$ is smooth on $G'$, then its matrix coefficients define Schwartz functions on $G$ by \cite[Theorem 2.6]{pedersen1994matrix} (see also \cite[Section 6.2]{bedos2022smooth} for more details). This implies, in particular, that 
$ 
\big\| (1+|\cdot|_U )^{N} V_g g \big\|_{L^{\infty}} < \infty
$
for all $N \in \mathbb{N}$, where $|\cdot|_U$ is the length function associated to the word metric $d_U$ defined by a compact symmetric generating set $U$. This easily shows that \Cref{eq:polynomialdecay} is satisfied for the length function $|\cdot|_U$. 
 For a general length function $|\cdot|$ associated to a periodic metric, it follows from \cite[Proposition 4.4]{breuillard2014geometry} that there exists $C>0$ such that, for all $x \in G$, \[ \frac{1}{C}|x|_U - C \leq |x| \leq C|x|_U + C. \]  In turn, given $N \in \mathbb{N}$, this implies the existence of a constant $C'_N > 0$ such that $(1+|x|_U)^{-N} \leq C_N' (1+|x|)^{-N}$ for all $x \in G$, which shows \Cref{eq:polynomialdecay}.
 
 For the remaining part, recall that the smooth vectors for $\rho$ are dense in $\Hpi$, and hence so are the vectors satisfying \Cref{eq:polynomialdecay}. This completes the proof.
\end{proof}

\section*{Acknowledgements}
The authors thank the referee for constructive input, Jos\'e Luis Romero for pointing out the quantitative version of \Cref{lem:bessel}, and Ulrik Enstad and Sven Raum for providing \Cref{lem:nilpotent_cover}. For the first named author, this research was funded by the Deutsche Forschungsgemeinschaft (DFG, German Research Foundation)--SFB-Gesch{\"a}ftszeichen --Projektnummer SFB-TRR 358/1 2023 --491392403.
For the second named author, this research was funded in whole or in part by the Austrian Science Fund (FWF): 10.55776/J4555,  10.55776/PAT2545623. 
\bibliographystyle{abbrv}
\bibliography{bib}

\end{document}